\renewcommand{\deg}{\texttt{deg}}
\newlength{\NOTskip}
\newtheorem{theorem}{Theorem}[section]
\newtheorem{lemma}[theorem]{Lemma}
\newtheorem{prop}[theorem] {Proposition}
\newtheorem{definition}[theorem] {Definition}
\newtheorem{corollary}[theorem] {Corollary}
\theoremstyle{definition}
\newtheorem{remark}[theorem] {Remark}
\newcommand{\dint}{{\operatorname {d}}}
\def\phi{\varphi }
\newcommand{\R}     {\mathbb{R}}
\newcommand{\N}     {\mathbb{N}}
\renewcommand{\P}   {\mathbb{P}}
\newcommand{\E}     {\mathbb{E}}
\renewcommand{\d}{\, d}
\renewcommand{\d}   {\texttt{D}\!}
\newcommand{\Cov}   {\operatorname{Cov}}
\newcommand{\1}{{\bf 1}}
\newcommand{\cG}{\mathcal{G}}
\newcommand{\cGi}{\mathcal{G}^{(i)}}
\newcommand{\out}{{\textnormal{\deg}^{+}}}
\newcommand{\ind}{\textnormal{\deg}^-}
\newcommand{\indi}{\textnormal{\deg}^{(i,-)}}
\newcommand{\Var}{\operatorname{Var}}
\renewcommand{\d}{\textnormal{\texttt{d}}^-}
\renewcommand{\o}{\textnormal{\texttt{d}}^+}
\newcommand{\di}{\textnormal{\texttt{d}}^{(-,i)}}
\newcommand{\oi}{\textnormal{\texttt{d}}^{(+,i)}}
\newcommand{\D}{\textnormal{\texttt{D}}}
\newcommand{\cd}{\textnormal{\texttt{d}}^{-}}
\newcommand{\cdi}{\bar{\textnormal{\texttt{d}}}^{(-,i)}}
\newcommand{\coi}{\bar{\textnormal{\texttt{d}}}^{(+,i)}}
\def\moverlay{\mathpalette\mov@rlay}
\def\mov@rlay#1#2{\leavevmode\vtop{%
		\baselineskip\z@skip \lineskiplimit-\maxdimen
		\ialign{\hfil$\m@th#1##$\hfil\cr#2\crcr}}}
\newcommand{\charfusion}[3][\mathord]{
	#1{\ifx#1\mathop\vphantom{#2}\fi
		\mathpalette\mov@rlay{#2\cr#3}
	}
	\ifx#1\mathop\expandafter\displaylimits\fi}
\begin{document}

\title{ \bfseries A central limit theorem for the number of isolated vertices in a preferential attachment random graph \\
}
\renewcommand{\thefootnote}{\fnsymbol{footnote}}
\author{\textsc{Carina Betken\footnotemark[1]}}

\renewcommand{\thefootnote}{\fnsymbol{footnote}}
\footnotetext[1]{Ruhr-Universität Bochum, Germany. Email: \texttt{carina.betken@rub.de}}
 
\maketitle

\begin{abstract}
  \noindent We study the number of isolated vertices in a preferential attachment random graph introduced by Dereich and Mörters in 2009. 
In this graph model vertices are added over time and newly arriving vertices connect to older ones with probability proportional to a (sub-)linear function of the indegree of the older vertex at that time. 
 Using Stein's method and a size-bias coupling, we deduce bounds in the Wasserstein distance between the law of the properly rescaled number of isolated vertices and a standard Gaussian distribution. 

  \par\medskip
\footnotesize
\noindent{\emph{2010 Mathematics Subject Classification}}: Primary 05C80, Secondary 60F05

  \par\medskip
\noindent{\emph{Keywords:} random graphs; preferential attachment; Stein's method; size-bias coupling; rates of convergence}
\end{abstract}

\tableofcontents

\section{Introduction}

Many structures  in science and nature in which components interact with one another can be modelled and analysed with the help of random networks. Each component is typically represented by a node and relations between components are indicated by edges connecting the corresponding vertices. There are numerous examples of structures that can be modelled in such a way including for example molecules in metabolisms, agents in technological systems and people in social networks, to name just a few. For more details and an overview over the mathematical research field of random networks we refer the reader to \cite{Hofstad2017}.\\

In order to better understand the structure of random graphs, the study of degree distributions and subgraph count statistics has been an active field of research since the introduction of the first mathematically rigorous random graph model by Erdös and Rényi in \cite{ER59} in the late 1950s. A random graph in this model consists of a fixed number $ n $ of vertices and a random number of edges, where each edge exists independent of all others with some fixed probability $ p $. % which might depend on $ n $. 
The number of small subgraphs, and triangles in particular,  in this  graph model was studied in  \cite{Rucinski88} and \cite{Roellin2017}. While the first of these uses cumulant bounds to show asymptotic normality, the latter makes use of a variation of Stein's method, the so-called Stein-Tikhomirov method that combines Stein's method with characteristic functions.  In \cite{Krokowski2017} and \cite{Barbour89} the authors study the number of vertices with a prescribed degree as well as subgraph count statistics.
In both works Stein's method is used to show convergence towards a Gaussian limit.
 \\
As mentioned above, one statistic which has been a frequent object of study is the number of vertices being directly connected to a fixed number $ d $ of other vertices. In \cite{Goldstein2013} the author derives a new  Berry–Esseen bound for sums of dependent random variables combining Stein’s method, size-bias couplings and an inductive technique, 
   to assess  the accuracy of the normal approximation for  the distribution of the number of vertices of a given degree  in the classical Erd\"os-Rényi  random graph with parameter $ p\approx \frac{\theta}{n-1}, ~\theta>0 $. This generalizes the result obtained by Kordecki handling the special case $ d=0 $,  see \cite{Kordecki90}. 
 More recently, in \cite{BRR2019} Barbour, R\"ollin and Ross used Stein couplings to deduce optimal bounds between the number of isolated vertices in the Erd\"os-Rényi random graph with parameter $ p\approx \lambda/n $ and the truncated Poisson distribution, strengthening the results given in \cite{RR2015}. Stein's method was also employed in \cite{Fang2014} to derive error  bounds  in total variation distance between a discretized normal  distribution and the number of vertices with a given degree in the Erd\"os-Rényi random graph and the uniform multinomial occupancy model. 
 The inhomogeneous random graph model 
  was dealt with in \cite{Penrose2018}. Using Stein's method, the author could show that in this model the number of isolated vertices  asymptotically follows a Poisson distribution. \\
\\
Due to its staightforward construction rules, which account for a lot of independencies, random quantities in the Erd\"os-Rényi random graph can often be considered in applications of rather general results, see for instance \cite{Goldstein2013} and \cite{Krokowski2017}.
However, this graph model does not explain the specific structures observed in many real world networks such as the World Wide Web, social interaction or biological neural networks, which usually exhibit powerlaw degree distributions. The principle of preferential attachment has become a well-known concept to explain the occurrence of these kinds of structures. Preferential attachment networks typically include two characteristic features: they are dynamic in the sense that vertices are successively added over time  and new vertices prefer to connect to older vertices, which are already well connected in the existing network.
The construction rules for such networks can be made precise in various ways, so that starting with the pioneering work \cite{Barabasi1999} of Barab\'asi and Albert, various different models of preferential attachment random graphs have appeared in the scientific literature in recent years (see for example \cite{Barabasi1999},  \cite{KR01}, \cite{Oliveira2005}, \cite{RTV07}, \cite{Dereich2009},  \cite{Ross2013}, \cite{JM2015} and \cite{GGLM2019})).  
Dependency structures in preferential attachment random graphs are clearly  more complex than those seen in the Erd\"os-Rényi random graph. Hence, results are in general less numerous and usually heavily dependent on the model at hand.  
In 
 \cite{PRR13} Pek\"oz, R\"ollin and Ross  successfully  applied Stein's method  to prove a rate of convergence in the  Kolmogorov distance for the indegree distribution of any fixed vertex to a power law distribution by comparing it to a mixed negative binomial distribution, whereas in  \cite{PRR17} the same authors  prove rates of convergence in the multidimensional case for joint degree distributions. One feature inherent to these models as well as to the Barab\'asi-Albert model is that every vertex connects to a fixed number of vertices when entering the network. In contrast, the model introduced in  \cite{Dereich2009} allows for random outdegrees, which seems to be a reasonable assumption. In the same work, Dereich and Mörters deduce the asymptotic indegree distribution in that model to be of the form
 \[
 \mu(k)=\frac{1}{1+f(k)}\prod_{i=1 }^{k-1}\frac{f(i)}{f(i)+1},
 \]
 where $ f $ denotes the so-called attachment function (see Section~\ref{sec:model} for details).
 Depending on this function, $ \mu $ can be a power-law or an exponentially decaying distribution. Developing Stein's method for this class of limiting distributions, the authors in  \cite{BDO18} give error bounds in the total variation distance between the indegree distribution and the corresponding limit 
   for that very same model. The same work also provides rates of convergence for the outdegree distribution towards a Poisson limit.\\
    In \cite{Dereich2013} the authors look at component sizes in this model and give an abstract criterion for the existence of a giant component for general concave attachment functions $ f $,
   which becomes explicit when restricting to linear functions. 
   \\
    \\
    An important aspect of  the model described in \cite{Dereich2009} is that the outdegree of a vertex can be zero, so that vertices with neither incoming nor outgoing edges, might emerge. 
 In the present paper we study the distribution of the number of these isolated vertices. More precisely, using Stein's method we are able to derive a central limit theorem for the number of isolated vertices in the model introduced by Dereich and M\"orters. 
 We use a result given in \cite{GR96}, which provides a general bound on the proximity of a properly rescaled random variable to the standard Gaussian distribution with the help of a size-bias coupling. To apply it to our setting, we define a random graph
in  which the number of isolated vertices follows the size-bias version of the distribution of the number of  isolated vertices in the original graph. 
  We also obtain rates of convergence, which crucially depend on the maximal growth behaviour of the attachment function $ f $. 
 \\
 \\
 The rest of the paper is structured as follows. In Section~\ref{sec:backgroundmaterial} we introduce the preferential attachment model described in \cite{Dereich2009} and state established results that we will rely on in our proofs. In Section~\ref{chapter:main result} we formulate our main result, the central limit theorem for the rescaled number of isolated vertices. Section~\ref{sec:SBconstruction} gives the construction of a random graph in which the number of isolated vertices follows the size-bias distribution of the number of isolated vertices in the original graph. Section~\ref{sec:proofMainresult} contains the proof of Theorem~\ref{Thm:isolatedvertices}. The proofs of the auxiliary lemmas needed to prove Theorem~\ref{Thm:isolatedvertices} can be found in Section~\ref{sec:proofs}.

\section{Preliminaries}\label{sec:backgroundmaterial}
In this chapter we provide background material on the underlying random graph model and  the main methods of proof. Let us start with some notational clarifications: by  $ \mathbb{N}_0 $ we denote the set $ \mathbb{N} \cup \{0\} $ and we write $ [n]:=\{1,\ldots,n\} $. Furthermore, for  functions $ g,f :\mathbb{N}_0^j\rightarrow \R$ we write
$ g\lesssim f $ if there exists a constant $ C \in \R$ such that
\[
g(i_1, \ldots, i_j) \leq C \, f(i_1, \ldots, i_j)
\]
for all $(i_1, \ldots i_j) \in \mathbb{N}_{0}^j$.
Similarly, we write
 $ g \asymp f $ if $ g\lesssim f $ and $ f\lesssim g$.
 
  For functions $f: \N_0\rightarrow \R$, we define $ \Delta f(k):= f(k)-f(k-1) $ and put $ f(-1)=0 $.  
Throughout the paper we will frequently use the following integral test for convergence: for any decreasing function $ g:\R\rightarrow \R^+ $ we have 
\begin{align*}
 \int_{k}^{n}g(x)\, \dint x\ \leq\  \sum_{\ell=k}^n g(\ell) \ \leq \ \int_{k-1}^n g(x) \, \dint x,
\end{align*}
so that for any $ \alpha<0  $
\begin{align}\label{integraltest}
 \sum_{\ell=k}^n \ell^\alpha\ \asymp \ \begin{cases}
 n^{\alpha+1} &\mbox{ for } \alpha>-1,\\
 \log(n) &\mbox{ for } \alpha=-1,\\
 k^{\alpha+1}&\text{ for } \alpha<-1,
 \end{cases}
\end{align}
where the implicit constant might depend on $ \alpha $.

\subsection{Model}\label{sec:model}
As mentioned before, the model we study was introduced  in~\cite{Dereich2009} and can be described as follows: we start with a graph $\cG_1$ consisting of one vertex (labelled $ 1 $) and no edges. At each discrete time step $ n $ we add one vertex labelled $ n $ to the network, and independently for each $ k \in [n-1]$ we add a directed edge from $ n $ to $ k $ with probability
\begin{align}\label{connectionprob}
\frac{f(\deg^-_{n-1}(k))}{n-1},
\end{align}
where $\deg_{n-1}^-(k)$ denotes the \textbf{indegree} of vertex $k$, i.e. the number of edges $ (m,k) $ pointing from younger vertices $ m$ to the older  vertex $ k $ in the graph $\cG_{n-1}$ on $ n-1 $ vertices. Note that we say vertex $ m $ is younger than vertex $ k $ if $ m >k $. The \textbf{attachment function} $f : \N_0 \rightarrow (0,\infty)$ is assumed to satisfy $f(n) \leq n+1$, so that the expression in \eqref{connectionprob} in fact lies between zero and one.
An example of a preferential attachment graph on 35 vertices  build according to these rules is depicted in Figure~\ref{Gn}.

Note that the probability of vertex $ n $ connecting to some older vertex $ k $ is given by
\begin{align}\label{eq:ProbOfEdge}
\P((n,k)\in \cG_n)=\E \left[\E \left[\1\{ (n,k)\in \cG_n\}\vert \mathcal{G}_{n-1}\right]\right]=\frac{\E\left[f(\deg_{n-1}^{–}(k))\right]}{n-1},
\end{align}
where $ \{ (n,k)\in \cG_n\}\ $ denotes the event that there exists an edge between vertices $ n $ and $ k $ with $ n > k $.
\begin{figure}[t]
	\centering
	\includegraphics[width=0.85\columnwidth]{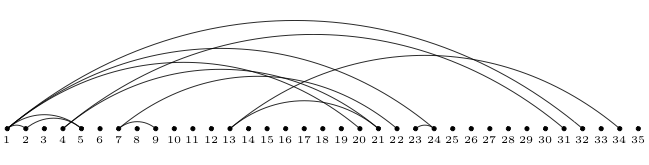}
	\caption{\small A possible realization of $ \cG_{35} $ with attachment function $ f(k)=\frac{1}{2}\sqrt{k}+ \frac{1}{2}. $}
	\label{Gn}
\end{figure}
Here, for fixed $ n $ the outgoing connections to older vertices are sampled independently, so that in contrast to many other models, as for instance those considered in \cite{Barabasi1999}, \cite{KR01} \cite{Oliveira2005}, \cite{Ross2013} and \cite{RTV07}, the \textbf{outdegree} $ \out (m) $, i.e. the number of edges $ (m,k) $ pointing from vertex $ m $ to older vertices $ k $, of every vertex $ m $ is random and can be zero.
 After $ n $ time steps, the graph $\cG_n$ consists of $n$ vertices and a random number of edges, where loops or multiple edges do not occur. 
 
 Note that the outdegree of every vertex is fixed after the time step in which it was inserted into the network. Also note, that although the existence of edges does depend on the existence of other edges, which makes the network in general more complicated to deal with than for example the Erdös-Rényi graph, the definition of the model, in particular the fact that decisions for outgoing edges of a fixed vertex are made independently from one another, brings about certain independence structures. As we will exploit these repeatedly throughout the proofs of our results, we will state them in a concise form here:
 \begin{itemize}
 	\item[\texttt{\textbf{(Ia)}}] For every $ m \in [n $] decisions for outgoing edges of vertex $ m $ are made independently from one another, i.e. for each $ k \in [m-1] $ the existence of edge $ (m,k) $ is independent of the existence of edge $(m, j) $ for all $ j \in [m-1], j \neq k $. In particular, this means that indegrees of distinct vertices are independent, i.e. $ \ind_m(k) $ and $ \ind_m(\ell) $ are independent random variables for $ k\neq \ell $ and all $ m \geq k,\ell $.
 	\item[\texttt{\textbf{(Ib)}}] For  each vertex $ k $ its indegree is independent of its outdegree, i.e. $ \out(k) $ and $ \ind_m(k) $ are independent random variables for all $ m \geq k $.
 	\item[\texttt{\textbf{(Ic)}}] For any $ m,k $ with $ m>k  $ the outdegree of $ k $ is independent of the indegree of vertex $ m $, i.e. for any  $ n,m,k $ such that $ n >m>k  $ the random variables $ \out(k) $ and $ \ind_n(m) $ are independent.

 \end{itemize}
 
Note that combining independence structures \texttt{\textbf{Ia}} and \texttt{\textbf{Ic}} implies that for any $ m,k $ with $m>k>i$ the existence of edge $ (m,k) $ is independent of the out- as well as of the indegree of vertex $ i $.  We  will now give some first order properties of $  f(\ind_n(i))$.

\begin{lemma}[Lemma 3.1 in \cite{BDO18}]\label{le:moment}
		For the preferential attachment model defined above with $f(k) \leq \gamma k + 1$ for all $k$, and some $\gamma \in (0,1)$, we have for all $n \in \N$,
	\[ \E[ f(\textnormal{\deg}^-_n(i))] \leq  \Big( \frac{n}{i} \Big)^\gamma \quad \mbox{for all } i \in [n] . \]
\end{lemma}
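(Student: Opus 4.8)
The plan is to prove the bound by induction on $n$. Write $\mu_n(i) := \E[f(\deg^-_n(i))]$, so the claim is $\mu_n(i) \leq (n/i)^\gamma$ for all $i \in [n]$. For the base case, when $n = i$ we have $\deg^-_i(i) = 0$ (vertex $i$ has just been inserted and has no incoming edges), so $\mu_i(i) = f(0) \leq 1 = (i/i)^\gamma$, using the standing assumption $f(k) \leq \gamma k + 1$ at $k = 0$. For the inductive step I would condition on $\cG_{n-1}$ and analyse how the indegree of a fixed vertex $i$ changes when vertex $n$ is added: independently, vertex $n$ attaches to $i$ with probability $f(\deg^-_{n-1}(i))/(n-1)$, so
\[
\deg^-_n(i) = \deg^-_{n-1}(i) + B,
\]
where, conditionally on $\cG_{n-1}$, $B$ is Bernoulli with parameter $f(\deg^-_{n-1}(i))/(n-1)$.

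The key computation is to control $\E[f(\deg^-_n(i)) \mid \cG_{n-1}]$ in terms of $f(\deg^-_{n-1}(i))$. Abbreviating $d := \deg^-_{n-1}(i)$, we get
\[
\E[f(\deg^-_n(i)) \mid \cG_{n-1}] = \Big(1 - \frac{f(d)}{n-1}\Big) f(d) + \frac{f(d)}{n-1} f(d+1).
\]
Now I would use the hypothesis $f(d+1) \leq \gamma(d+1) + 1 = f(d) + \gamma - (f(d) - \gamma d - 1) \le f(d)+\gamma$ — more directly, $f(d+1) - f(d) \leq \gamma(d+1)+1 - f(d)$, but the clean bound to aim for is $f(d+1) \le f(d) + \gamma$ whenever $f(d) \ge \gamma d + 1 - (\text{something})$; in any case the increment $f(d+1)-f(d)$ is at most... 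Let me instead bound crudely: since $f(d+1) \le \gamma(d+1)+1$ and we want a multiplicative recursion, the cleanest route is to show
\[
\E[f(\deg^-_n(i)) \mid \cG_{n-1}] \le f(d)\Big(1 + \frac{\gamma}{n-1}\Big),
\]
which follows because the conditional expectation equals $f(d) + \frac{f(d)}{n-1}\big(f(d+1) - f(d)\big)$ and $f(d+1) - f(d) \le \gamma$ (this monotone-increment bound is itself a consequence of $f(k) \le \gamma k + 1$ together with $f > 0$, or can be taken as part of the standing assumptions in \cite{BDO18}; if it is not available, one argues directly that $f(d+1)-f(d) \le \gamma(d+1)+1-f(d) \le \gamma$ once $f(d) \ge \gamma d + 1$, and otherwise estimates separately). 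Taking expectations over $\cG_{n-1}$ and applying the induction hypothesis $\mu_{n-1}(i) \le ((n-1)/i)^\gamma$ gives
\[
\mu_n(i) \le \mu_{n-1}(i)\Big(1 + \frac{\gamma}{n-1}\Big) \le \Big(\frac{n-1}{i}\Big)^\gamma\Big(1 + \frac{\gamma}{n-1}\Big).
\]

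It then remains to check the elementary inequality $(n-1)^\gamma\big(1 + \frac{\gamma}{n-1}\big) \le n^\gamma$, i.e. $(1 + \tfrac{1}{n-1})^\gamma \ge 1 + \tfrac{\gamma}{n-1}$, which is exactly Bernoulli's inequality $(1+x)^\gamma \ge 1 + \gamma x$ for $x > 0$ and $\gamma \in (0,1)$ — wait, the direction is reversed: for $\gamma \in (0,1)$ one has $(1+x)^\gamma \le 1 + \gamma x$, so in fact $1 + \frac{\gamma}{n-1} \ge (1+\frac{1}{n-1})^\gamma = (n/(n-1))^\gamma$, which gives precisely what is needed after multiplying through by $(n-1)^\gamma$. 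Dividing by $i^\gamma$ closes the induction. The main obstacle, and the step deserving the most care, is the increment estimate $f(d+1) - f(d) \le \gamma$: the hypothesis $f(k) \le \gamma k + 1$ bounds $f$ pointwise but does not a priori bound its increments, so one must either invoke a monotonicity/concavity property of $f$ that is part of the model assumptions in \cite{Dereich2009,BDO18}, or handle the regime $f(d) < \gamma d + 1$ by a separate (weaker but sufficient) estimate on the conditional expectation; everything else is a routine induction combined with Bernoulli's inequality.
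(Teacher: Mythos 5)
The paper never proves this lemma --- it is imported verbatim from \cite{BDO18} --- so there is no in-paper proof to compare against; the closest analogue is Lemma~\ref{le:orderElinear}, which runs exactly your one-step recursion for linear $f$ and obtains $\E[f(\deg^-_n(i))]=f(0)\prod_{j=i}^{n-1}(1+\gamma/j)$, but is careful to claim only ``$\approx(n/i)^\gamma$'', not ``$\leq$''. Your induction breaks precisely at the point where you try to upgrade this to ``$\leq$''. You correctly reduce the inductive step to $(n-1)^\gamma\big(1+\tfrac{\gamma}{n-1}\big)\leq n^\gamma$, i.e.\ $1+\tfrac{\gamma}{n-1}\leq\big(1+\tfrac{1}{n-1}\big)^\gamma$, correctly observe that for $\gamma\in(0,1)$ Bernoulli gives $(1+x)^\gamma\leq 1+\gamma x$ --- and then assert that this ``gives precisely what is needed''. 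It gives the opposite: the inequality you need is strictly false for every $n\geq 2$ and $\gamma\in(0,1)$ (already $1+\gamma>2^\gamma$). So the multiplicative recursion $\mu_n(i)\leq\mu_{n-1}(i)\big(1+\tfrac{\gamma}{n-1}\big)$ can only ever deliver $\mu_n(i)\leq f(0)\prod_{j=i}^{n-1}(1+\gamma/j)=f(0)\,\tfrac{\Gamma(n+\gamma)\Gamma(i)}{\Gamma(n)\Gamma(i+\gamma)}$, which is of \emph{order} $(n/i)^\gamma$ but exceeds it in general; the induction as written does not close on the constant-free bound. (Indeed, for $f(k)=\gamma k+1$, which satisfies the hypothesis, $i=1$ and $n=2$ give $\E[f(\deg^-_2(1))]=f(1)=1+\gamma>2^\gamma$, so the clean constant $1$ cannot come out of this route without extra slack or extra hypotheses from \cite{BDO18}.)

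Your second worry --- that $f(d+1)-f(d)\leq\gamma$ does not follow from $f(k)\leq\gamma k+1$ --- is legitimate, and the standard way around it also repairs nothing less than the whole inductive step: do not induct on $u_n:=\E[f(\deg^-_n(i))]$ but on the linear majorant $v_n:=\gamma\,\E[\deg^-_n(i)]+1\geq u_n$. Since $\E[\deg^-_n(i)\mid\cG_{n-1}]=\deg^-_{n-1}(i)+f(\deg^-_{n-1}(i))/(n-1)$, one gets $v_n\leq v_{n-1}+\tfrac{\gamma}{n-1}u_{n-1}\leq v_{n-1}\big(1+\tfrac{\gamma}{n-1}\big)$ with $v_i=1$, using no information about the increments of $f$ at all. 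This yields $u_n\leq\prod_{j=i}^{n-1}(1+\gamma/j)\leq C\,(n/i)^\gamma$, which is all the present paper ever needs from the lemma. If you insist on your route you must both justify the increment bound (it fails even for concave admissible $f$ with small $f(0)$) and replace the Bernoulli step; the majorant trick disposes of the first issue but still leaves you with the product $\prod_{j=i}^{n-1}(1+\gamma/j)$ rather than $(n/i)^\gamma$ itself.
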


In the remainder of this subsection we will state some of the results that have been established in \cite{Dereich2009} and \cite{Dereich2013} for the preferential attachment model described above and which will turn out to be beneficial for the proof of our main result.
Conditioning on  vertex $ m $  having a certain indegree at a specific point in time clearly influences the evolution of  the indegree process $ (\deg^-_n(m) )_{n\geq m }$ for all times $ n \geq m $. Lemma~\ref{le:DM 2.8} shows that this influence can be bounded, where the bound depends on the attachment function $ f $. Lemma~\ref{le:StochasticDomination} shows that the degree process, which is known to be in state $ k+1$  for some $ k \in \N_0 $ at time $ m $, stochastically dominates the process which is known to start in $ k $ at the same time conditioned on gaining an edge at some later point in time. Intuitively speaking, this means that the earlier an edge enters the network, the bigger its influence on the emergence of new edges. Note that the connection probability of two vertices only depends on the point in time the younger of the two vertices enters the network and the indegree of the older of the two at that time. This implies that, given its indegree the birthtime of the older vertex is irrelevant for the probability of connecting the two, i.e. 
\begin{align}\label{Eq:OnlyState}
\P((n,\ell)\in \cG\vert \ind_{m}(\ell)=k)=\P((n,r)\in \cG\vert \ind_{m}(r)=k)
\end{align}
  for all $\ell, r \leq m < n $. In particular, this equality holds for $ r= m. $ In order to shorten notation, we will sometimes denote the probability in \eqref{Eq:OnlyState} by  $ \P^{(k)}((n,m) \in \cG) $. 
\begin{lemma}[Lemma 2.8 in \cite{Dereich2013}] \label{le:DM 2.8}
	For an attachment rule $ f $ and integers $ k,m,n\in \N_0 $ with $ 0< m \leq n $ one has
	\begin{equation}
		\frac{\E^{k+1}[f(\textnormal{\deg}^-_n(m))]}{\E^k[f(\textnormal{\deg}^-_n(m))]} \leq \frac{f(k+1)}{f(k)},
	\end{equation}
	where  $ \E^k $ denotes the expectation with respect to the process $ (\deg_n^-(m))_{n\geq m} $ conditional on $ \deg_m^-(m)=k $ (i.e.  with respect to the measure $ \P^{(k)} $). 
	If $ f $ is linear and $ f(k+1+\ell)\leq m+\ell $ for all $ \ell \in \{0,\ldots,n-m-1\} $,
	then equality holds.
\end{lemma}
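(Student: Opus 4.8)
My plan is to prove the inequality by induction on the length of the indegree evolution, which reduces it to a concavity statement, and to obtain the equality in the affine case by an explicit product formula.

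Write $p_k(m,n):=\E^k\big[f(\deg_n^-(m))\big]$, so that $p_k(n,n)=f(k)$ and the claim is $p_{k+1}(m,n)\,f(k)\le p_k(m,n)\,f(k+1)$. Conditioning on whether the vertex inserted at time $m+1$ attaches to $m$ --- which by \eqref{connectionprob} has probability $f(k)/m$ given $\deg_m^-(m)=k$ --- and using the Markov property of $(\deg_n^-(m))_{n\ge m}$ gives, for $m<n$,
\[
 p_k(m,n)=\Big(1-\tfrac{f(k)}{m}\Big)p_k(m+1,n)+\tfrac{f(k)}{m}\,p_{k+1}(m+1,n).
\]
Substituting this into $f(k+1)p_k(m,n)-f(k)p_{k+1}(m,n)$ and collecting the terms of order $1$ and of order $1/m$ separately yields the exact identity
\[
 f(k+1)p_k(m,n)-f(k)p_{k+1}(m,n)=\big[f(k+1)p_k(m+1,n)-f(k)p_{k+1}(m+1,n)\big]-\tfrac{f(k)f(k+1)}{m}\,D_k(m+1,n),
\]
with $D_k(m',n):=p_{k+2}(m',n)-2p_{k+1}(m',n)+p_k(m',n)$. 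The first bracket on the right is $\ge 0$ by the inductive hypothesis (it is the same quantity for the one-step-shorter evolution, started at time $m+1$), so the claim follows once $D_k(m+1,n)\le 0$; that is, everything reduces to the statement: \emph{for a concave attachment function $f$ the map $j\mapsto\E^j[f(\deg_n^-(m))]$ is concave in $j$ for all $m\le n$.} (That it is also non-decreasing is immediate from the obvious monotone coupling of the evolutions started from $j$ and $j+1$, and will be used below.)

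The concavity is where the actual work lies, and I would establish it by a maximum-principle argument on the second difference $D_k$. The mechanism is most transparent in continuous time, where the indegree of a fixed vertex evolves as the pure birth process with birth rates $\big(f(j)\big)_{j\ge 0}$: with $\psi_j(t):=\E^j[f(N(t))]$ one has $\psi_j(0)=f(j)$ and $\psi_j'(t)=f(j)\big(\psi_{j+1}(t)-\psi_j(t)\big)$, so that $t\mapsto\psi_j(t)$ and $j\mapsto\psi_j(t)$ are non-decreasing and the identity above takes the differentiated form $\big(f(k+1)\psi_k-f(k)\psi_{k+1}\big)'=-f(k)f(k+1)\big(\psi_{k+2}-2\psi_{k+1}+\psi_k\big)$. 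Set $\delta_k(t):=\psi_{k+2}(t)-2\psi_{k+1}(t)+\psi_k(t)$; then $\delta_k(0)=f(k+2)-2f(k+1)+f(k)\le 0$, and if $t_0>0$ were the first time (over all $k$; a truncation argument handles the supremum) some $\delta_k$ reaches $0$, so $\delta_j(t)\le 0$ for all $j$ and $t\le t_0$ with $\delta_k(t_0)=0$, then, writing $a:=\psi_{k+1}(t_0)-\psi_k(t_0)=\psi_{k+2}(t_0)-\psi_{k+1}(t_0)\ge 0$ and using $\psi_{k+3}(t_0)-\psi_{k+2}(t_0)\le a$ (from $\delta_{k+1}(t_0)\le 0$),
\[
 \delta_k'(t_0)=f(k+2)\big(\psi_{k+3}-\psi_{k+2}\big)-2f(k+1)\,a+f(k)\,a\le\big(f(k+2)-2f(k+1)+f(k)\big)a\le 0,
\]
which is strict when $f$ is strictly concave at $k$ (then $a>0$, since $\psi_{k+1}(t_0)>\psi_k(t_0)$ for $t_0>0$), contradicting that $\delta_k$ leaves $(-\infty,0]$ at $t_0$. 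Hence $D_k\le 0$ throughout, as required (the non-strictly-concave case following by approximation, the affine case being the equality statement below). I expect making this rigorous --- carrying the concavity through the discrete evolution, or equivalently transferring it from a continuous-time version of the model --- to be the main obstacle: a naive term-by-term estimate of the $1/m$ curvature term pulls in higher-order differences of $f$, whereas the maximum-principle formulation keeps the hypothesis at ``$f$ concave''.

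For the equality claim let $f(x)=\gamma x+\eta$ be affine. Then $f(d+1)-f(d)=\gamma$ for every $d\in\N_0$, so conditioning on $\cG_{n-1}$ exactly as in the proof of Lemma~\ref{le:orderElinear} gives $\E^k[f(\deg_n^-(m))]=\big(1+\tfrac{\gamma}{n-1}\big)\E^k[f(\deg_{n-1}^-(m))]$; iterating from $\E^k[f(\deg_m^-(m))]=f(k)$ yields $\E^k[f(\deg_n^-(m))]=f(k)\prod_{j=m}^{n-1}\big(1+\tfrac{\gamma}{j}\big)$, where the dependence on $k$ has factored out completely, so the ratio equals $f(k+1)/f(k)$. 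The only point to check is that this step-by-step collapse is exact --- that the attachment probabilities $f(\deg_t^-(m))/t$ are genuine (never capped) along the relevant paths: under $\E^{k+1}$ the indegree at time $t=m+\ell$ is at most $k+1+\ell$, so it suffices that $f(k+1+\ell)\le m+\ell$ for $\ell\in\{0,\dots,n-m-1\}$, which is precisely the stated hypothesis, and the corresponding bound under $\E^k$ then follows from monotonicity of $f$.
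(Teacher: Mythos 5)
This lemma is quoted from \cite{Dereich2013} and the paper gives no proof of it, so there is nothing in-paper to compare against; I am judging your argument on its own. Your reduction is correct and is the natural route: the one-step recursion $p_k(m,n)=\big(1-\tfrac{f(k)}{m}\big)p_k(m+1,n)+\tfrac{f(k)}{m}p_{k+1}(m+1,n)$ is right, the identity expressing $f(k+1)p_k(m,n)-f(k)p_{k+1}(m,n)$ as the same quantity one step later minus $\tfrac{f(k)f(k+1)}{m}D_k(m+1,n)$ checks out, the base case $n=m$ is trivial, and the equality statement for affine $f$ is handled correctly (the hypothesis $f(k+1+\ell)\le m+\ell$ is exactly what makes the $(1+\gamma/j)$ collapse exact, so the $k$-dependence factors out of $\E^k[f(\deg^-_n(m))]=f(k)\prod_{j=m}^{n-1}(1+\gamma/j)$). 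Note that you silently use concavity and monotonicity of $f$ throughout; this is legitimate only because ``attachment rule'' in \cite{Dereich2013} means a non-decreasing concave function, which is stronger than the hypotheses this paper otherwise imposes on $f$.

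The genuine gap is the concavity of $j\mapsto \E^j[f(\deg^-_n(m))]$, which after your reduction \emph{is} the lemma, and which you do not prove. Your maximum-principle argument concerns the time-homogeneous pure birth process with rates $f(j)$, which is not the process at hand: here the transition probability at step $\ell$ is $f(j)/\ell$, so the dynamics are time-inhomogeneous and the continuous-time computation does not transfer as stated. Even for the surrogate process the argument is incomplete: the ``first time $\delta_k$ hits $0$'' device fails when $\delta_k(0)=0$ (e.g.\ $f$ affine on a stretch), and the promised approximation by strictly concave $f$ changes the law of the process itself, so it is not a routine limit. The repair is to run the induction backwards in $\ell$ directly in discrete time: with $q_j(\ell):=p_j(\ell,n)$, $a_j:=q_{j+1}(\ell+1)-q_j(\ell+1)\ge 0$ and $\Delta^2 q_j:=q_{j+2}-2q_{j+1}+q_j$, Abel summation together with $a_{j+1}\le a_j$, $0\le \Delta f(j+1)\le \Delta f(j)$ gives
\[
\Delta^2 q_j(\ell)\;\le\;\Big(1-\tfrac{f(j+1)}{\ell}\Big)\Delta^2 q_j(\ell+1)+\tfrac{f(j+2)}{\ell}\,\Delta^2 q_{j+1}(\ell+1)\;\le\;0,
\]
the terminal condition being $\Delta^2 q_j(n)=f(j+2)-2f(j+1)+f(j)\le0$; one must still check $f(j+1)\le\ell$ for the states entering the recursion, which is the same boundedness issue your equality clause already confronts. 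Until this (or an equivalent discrete argument) is supplied, the inequality is not established.
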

\begin{lemma}[Lemma 2.10 in \cite{Dereich2013}]\label{le:StochasticDomination}
	 For integers $ 0 \leq k < m < n $, there exists a coupling of the process $ (\textnormal{\deg}_\ell^-(m) : \ell \geq m)  $ started in $ \textnormal{\deg}_m^-(m)= k $ and conditioned on $ \textnormal{\deg}_{n+1}^-(m) -\textnormal{\deg}_n^-(m)=1 $ and the unconditional process $ (\textnormal{\deg}_\ell^-(m) : \ell \geq m)  $ started in $ \textnormal{\deg}_m^-(m)= k +1$, such that for the coupled random evolutions, say $ (Y^{(1)}(\ell):\ell \geq m) $ and $ (Y^{(2)}(\ell):\ell \geq m) $, one has
	 \[
	\Delta Y^{(1)}(\ell)\leq \Delta Y^{(2)}(\ell) + 1\{\ell = n\},
	 \]
 and therefore in particular $ Y^{(1)}(\ell) \leq Y^{(2)}(\ell) $ for all $ \ell \geq m $.
\end{lemma}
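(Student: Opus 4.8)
The plan is to realise the conditioned process as a Doob $h$-transform of the time-inhomogeneous Markov chain $(\deg^-_\ell(m))_{\ell\ge m}$ — whose increment at each step $\ell\to\ell+1$ is an independent $\mathrm{Bernoulli}(f(\deg^-_\ell(m))/\ell)$ — and then to couple it with the unconditional chain started one unit higher by the usual monotone (common–uniform) coupling. The only structural input on $f$ is that it is non-decreasing (automatic for positive concave attachment functions, in particular for those treated in Theorems~\ref{Thm:linearfunctions}–\ref{Thm:isolatedvertices}), together with Lemma~\ref{le:DM 2.8}.

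Concretely, write $G^{(\ell)}_n(j):=\E\bigl[f(\deg^-_n(m))\mid \deg^-_\ell(m)=j\bigr]$ for $m\le\ell\le n$, so $G^{(n)}_n(j)=f(j)$ and, conditioning on the step $\ell\to\ell+1$,
\[
G^{(\ell)}_n(j)=\tfrac{f(j)}{\ell}\,G^{(\ell+1)}_n(j+1)+\bigl(1-\tfrac{f(j)}{\ell}\bigr)G^{(\ell+1)}_n(j)\qquad(m\le \ell<n).
\]
Conditioning the chain on $\{\deg^-_{n+1}(m)-\deg^-_n(m)=1\}$ is the $h$-transform with $h(\ell,\cdot)\propto G^{(\ell)}_n(\cdot)$, so the conditioned chain $Y^{(1)}$ is again Markov: for $\ell<n$ it jumps from state $j$ with probability $\hat p_\ell(j)=\tfrac{f(j)}{\ell}\,G^{(\ell+1)}_n(j+1)/G^{(\ell)}_n(j)$, at $\ell=n$ it jumps with probability $1$, and for $\ell>n$ it is unconditional. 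The key estimate I would establish is that $\hat p_\ell(j)\le f(j+1)/\ell$ for $m\le\ell<n$ and $j\ge0$. To see it, put $r:=G^{(\ell+1)}_n(j+1)/G^{(\ell+1)}_n(j)\ge1$ and $c:=f(j)/\ell$, where $c\le1$ since the indegree of $m$ at time $\ell$ is at most $\ell-1$ (using $k<m$); the recursion gives $G^{(\ell+1)}_n(j+1)/G^{(\ell)}_n(j)=r/(1+c(r-1))$, the map $r\mapsto r/(1+c(r-1))$ is non-decreasing on $[1,\infty)$ when $c\le1$, and $r\le f(j+1)/f(j)$ by Lemma~\ref{le:DM 2.8} applied with birth time $\ell+1$; substituting and using $f(j+1)\ge f(j)$ gives the claim after a one-line simplification.

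Granting this, I would build the coupling inductively in $\ell$, starting from $Y^{(1)}(m)=k$, $Y^{(2)}(m)=k+1$. For $m\le\ell<n$, given $Y^{(1)}(\ell)=a<b=Y^{(2)}(\ell)$, use a common $U_\ell\sim\mathrm{Unif}[0,1]$ and set $\Delta Y^{(1)}(\ell)=\1\{U_\ell<\hat p_\ell(a)\}$, $\Delta Y^{(2)}(\ell)=\1\{U_\ell<f(b)/\ell\}$; since $a+1\le b$ and $f$ is non-decreasing, the key estimate gives $\hat p_\ell(a)\le f(a+1)/\ell\le f(b)/\ell$, hence $\Delta Y^{(1)}(\ell)\le\Delta Y^{(2)}(\ell)$, so the gap $Y^{(2)}-Y^{(1)}$ is non-decreasing on $[m,n]$ and stays $\ge1$ (which also justifies the standing assumption $a<b$). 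At $\ell=n$ let $Y^{(1)}$ jump deterministically and $Y^{(2)}$ jump with its correct probability $f(Y^{(2)}(n))/n$, independently: then $\Delta Y^{(1)}(n)=1\le\Delta Y^{(2)}(n)+1$ and the gap decreases by at most $1$, remaining $\ge0$. For $\ell>n$ both chains are unconditional, and the common–uniform coupling preserves $Y^{(1)}(\ell)\le Y^{(2)}(\ell)$ and $\Delta Y^{(1)}(\ell)\le\Delta Y^{(2)}(\ell)$. By construction the marginals are the stated laws, $\Delta Y^{(1)}(\ell)\le\Delta Y^{(2)}(\ell)+\1\{\ell=n\}$ for every $\ell$, and therefore $Y^{(1)}(\ell)\le Y^{(2)}(\ell)$ for all $\ell\ge m$.

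The main obstacle is the key estimate on $\hat p_\ell$: conditioning on the far-future jump at time $n$ distorts all the jump rates before time $n$, and one must show this distortion never pushes the conditioned rate above the unconditional rate of a process sitting one step higher. This is precisely where Lemma~\ref{le:DM 2.8} enters, via the elementary monotonicity of $r\mapsto r/(1+c(r-1))$; everything afterwards is routine coupling bookkeeping.
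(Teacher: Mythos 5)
This statement is quoted verbatim from Dereich and M\"orters (Lemma 2.10 in \cite{Dereich2013}); the paper itself gives no proof, so there is nothing in-paper to compare against. Your argument — realising the conditioned chain as a Doob $h$-transform with $h(\ell,\cdot)\propto G^{(\ell)}_n(\cdot)$, bounding the transformed jump rate by $f(j+1)/\ell$ via the monotonicity of $r\mapsto r/(1+c(r-1))$ together with Lemma~\ref{le:DM 2.8}, and then running a common-uniform monotone coupling — is correct, and it is essentially the canonical route (and in substance the one taken in the source). Two small points you should make explicit: (a) the whole argument, including $r\ge 1$, $f(j+1)\ge f(j)$, and the ordering $f(a+1)/\ell\le f(b)/\ell$ in the coupling step, uses that $f$ is non-decreasing; this is part of the definition of an attachment rule in \cite{Dereich2013} but is not restated as a hypothesis in the present paper, so it deserves a sentence. (b) The application of Lemma~\ref{le:DM 2.8} ``with birth time $\ell+1$'' is legitimate because the transition mechanism depends only on the current time and current indegree, but that Markov-property remark is what licenses it and should be said rather than left implicit. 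With those two clarifications the proof is complete.
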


\subsection{Stein's method and size-bias coupling}
The main tool used to prove  Theorem~\ref{Thm:isolatedvertices} is to apply  Theorem 1.1 in \cite{GR96}, which uses Stein's method in combination with a size-bias coupling to give a general bound for the approximation of a properly rescaled random variable by a normal distribution. In Theorem~\ref{Thmsizebias} we state a slightly modified version of it, which has already been adapted to the context of random graphs. Before we do so, we recall the definition of size-bias distributions.
\begin{definition}\label{def:size-bias}
	For a random variable $ X\geq 0 $ with $ \E\left[X\right]=\mu < \infty $, we say that the random variable $ X^s $ has the \textbf{size-bias distribution} with respect to X if for all f such that $ \E\left[Xf(X)\right]<\infty $ we have 
	\begin{equation}\label{eq:sizebiaseq}
	\E\left[Xf(X)\right]=\mu \E\left[f(X^s)\right].
	\end{equation}
\end{definition}
For a discrete $ \mathbb{N}_0 $-valued random variable $ X $  Equation \eqref{eq:sizebiaseq} is equivalent to 
\begin{equation}\label{eq.SizeBiasAlternative}
\P(X^s=k)=\frac{k \,\P(X=k)}{\mu} \qquad \forall k \in \mathbb{N}_0.
\end{equation}
This identity nicely illustrates that the size-bias distribution is indeed the original distribution biased by the size of the random variable. 
As stated before, the following result is a slight modification of \cite[Theorem 1.1]{GR96} adapted to the context of random graphs. The proof is identical to the proof given in \cite{GR96}, except for conditioning on the whole graph $ \cG_n $ instead of $ W_n $.

\begin{theorem}\label{Thmsizebias}
	For a random graph $ \mathcal{G}_n $ let $ W_n\geq 0 $ be some $ \sigma(\mathcal{G}_n) $-measurable random variable with $ W_n \geq 0  $, $ \E\left[W_n\right]=\mu_n < \infty $ and $\Var (W_n)=\sigma_n^2 $. Let $W_n^s $ be defined on the same space as $ W_n $ and have the size-bias distribution with respect to $ W_n $. If $ \widetilde{W}_n
	=\frac{W_n-\mu_n}{\sigma_n} $ and $ Z\sim \mathcal{N}(0,1) $, then
	\begin{equation}
	d_W(\widetilde{W}_n,Z)\leq \frac{\mu_n}{\sigma_n^2}\sqrt{\frac{2}{\pi}}\sqrt{\Var(\E\left[W_n^s-W_n\vert \mathcal{G}_n\right])}+\frac{\mu_n}{\sigma_n^3}\E\left[(W_n^s-W_n)^2\right].\label{Thmbound}
	\end{equation}
\end{theorem}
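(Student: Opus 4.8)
The plan is to reprove \cite[Theorem 1.1]{GR96} in the stated conditional form by the standard size bias version of Stein's method for normal approximation. Abbreviate $W = W_n$, $\mu = \mu_n$, $\sigma = \sigma_n$, $\tilde W = (W-\mu)/\sigma$, and let $Z \sim \mathcal{N}(0,1)$. Recall that for a $1$-Lipschitz test function $h$ the bounded solution $f = f_h$ of the Stein equation $f'(w) - w f(w) = h(w) - \E[h(Z)]$ satisfies $\|f_h'\|_\infty \le \sqrt{2/\pi}$ and $\|f_h''\|_\infty \le 2$, so that
\[
d_W(\tilde W, Z) = \sup_{h}\big|\E\big[\,f_h'(\tilde W) - \tilde W f_h(\tilde W)\,\big]\big|,
\]
the supremum ranging over $1$-Lipschitz $h$. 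It therefore suffices to bound $\E[f'(\tilde W) - \tilde W f(\tilde W)]$ for one fixed such $f$.

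First I would invoke the defining identity of the size bias distribution with the function $g(w) := f\big((w-\mu)/\sigma\big)$, namely $\E[W g(W)] = \mu\,\E[g(W^s)]$, which rewrites as
\[
\E[\tilde W f(\tilde W)] = \tfrac{1}{\sigma}\E\big[(W-\mu)f(\tilde W)\big] = \tfrac{\mu}{\sigma}\Big(\E\big[f(\tilde{W}^{s})\big] - \E\big[f(\tilde W)\big]\Big),
\]
where $\tilde{W}^{s} := (W^s-\mu)/\sigma = \tilde W + \Delta$ with $\Delta := (W^s-W)/\sigma$, using that $W^s$ is defined on the same space as $W$. A first-order Taylor expansion $f(\tilde W + \Delta) - f(\tilde W) = \Delta f'(\tilde W) + R$ with $|R| \le \tfrac12\|f''\|_\infty\Delta^2 \le \Delta^2$ then gives
\[
\E[\tilde W f(\tilde W)] = \tfrac{\mu}{\sigma^2}\E\big[(W^s-W)f'(\tilde W)\big] + \tfrac{\mu}{\sigma}\E[R].
\]

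The heart of the argument is to recognise that the first term above is a perturbation of $\E[f'(\tilde W)]$. Since $\tilde W$ is a function of $W_n$, it is $\sigma(\mathcal{G}_n)$-measurable, so with $D := \E[W^s - W \mid \mathcal{G}_n]$ one has $\E[(W^s-W)f'(\tilde W)] = \E[D f'(\tilde W)]$, while $\E[D] = \E[W^s] - \mu = \E[W^2]/\mu - \mu = \sigma^2/\mu$. Writing $D = \E[D] + (D-\E[D])$ and using $\tfrac{\mu}{\sigma^2}\E[D] = 1$ yields
\[
\E\big[f'(\tilde W) - \tilde W f(\tilde W)\big] = -\tfrac{\mu}{\sigma^2}\E\big[(D-\E[D])f'(\tilde W)\big] - \tfrac{\mu}{\sigma}\E[R].
\]
Bounding the first term by Cauchy--Schwarz, $\tfrac{\mu}{\sigma^2}\|f'\|_\infty\,\E\big[|D-\E[D]|\big] \le \tfrac{\mu}{\sigma^2}\sqrt{2/\pi}\,\sqrt{\Var(D)}$, and the second by $\tfrac{\mu}{\sigma}\E|R| \le \tfrac{\mu}{\sigma}\E[\Delta^2] = \tfrac{\mu}{\sigma^3}\E[(W^s-W)^2]$, and noting $\Var(D) = \Var\big(\E[W_n^s - W_n \mid \mathcal{G}_n]\big)$, one obtains \eqref{Thmbound} after taking the supremum over $h$.

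I do not expect a genuine obstacle here, as the statement is only a mild repackaging of \cite[Theorem 1.1]{GR96} adapted to random graphs. The two points that require care are: keeping track of which expectations may be pulled inside the conditional expectation given $\sigma(\mathcal{G}_n)$ --- which is legitimate because $\tilde W$, being a function of $W_n$, is $\mathcal{G}_n$-measurable, even though $W^s$ need not be --- and using the correct Stein factors $\sqrt{2/\pi}$ and $2$ for Lipschitz test functions, since these are exactly what produce the two constants displayed in \eqref{Thmbound}.
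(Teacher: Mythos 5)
Your argument is correct and is exactly the standard size-bias Stein's method proof of \cite[Theorem 1.1]{GR96}: the paper does not reprove this theorem but simply cites that reference, and your derivation (size-bias identity applied to $f_h((w-\mu)/\sigma)$, Taylor expansion with the Stein factors $\sqrt{2/\pi}$ and $2$, conditioning on $\mathcal{G}_n$, and Cauchy--Schwarz on $D-\E[D]$) reproduces precisely the argument behind it. The only cosmetic caveat is that $f_h'$ is merely Lipschitz rather than differentiable everywhere, so the remainder bound $|R|\le\tfrac12\|f_h''\|_\infty\Delta^2$ should be read with $\|f_h''\|_\infty$ as the Lipschitz constant of $f_h'$, which is the standard convention.
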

If $ W_n=\sum_{i=1}^{n}X_i $ with $ X_i\geq 0 $ and $ \E\left[X_i\right]=\nu_i $, \cite{GR96} as well as \cite[Section 3.4.1]{Ross2011} provide the following construction of a size-bias version of $ W_n $:
\begin{enumerate}
	\item For each $ i=1, \ldots n $, let $ X_i^s $ have the size-bias distribution of $ X_i $ independent of $ (X_j)_{j \neq i} $ and $ (X^s_j)_{j \neq i} $. Given $ X_i^s=x $, define the vector $ (X_j^{(i)})_{j \neq i} $ to have the distribution of $ (X_{j})_{j \neq i} $ conditional on $ X_i=x $.
	\item Choose a random summand $ X_I $, where the index $ I $ is chosen proportional to $ \mu_i $ and independent of everything else. Specifically, we have $ \P(I=i)=\frac{\nu_i}{\mu_n} $, where $ \mu_n=\E\left[W_n\right] $.
	\item Define $  W_n^s=\sum_{j\neq I}X_j^{(I)}+X_I^s $.
\end{enumerate}

For the special case of Bernoulli random variables $ X_i $, the random variable $ X_i^s \equiv 1 $ has the size-bias distribution of $ X_i $ (see for example \cite[Section 2.2]{AGK2019}), so that with the construction above we obtain the following result (see \cite[Corollary 3.24]{Ross2011}):
\begin{prop}\label{cor:sizebiasBernoulli}
	Let $ X_1, \ldots X_n $ be zero-one random variables and let $ p_i:=\P(X_i=1) $. For each $ i=1, \ldots, n $ let $ (X_j^{(i)})_{j \neq i} $ have the distribution of $ (X_j)_{j \neq i} $ conditional on $ X_i=1 $. If $ W_n=\sum_{i=1}^{n}X_i,\, \mu_n=\E\left[W_n\right] $, and $ I $ is chosen independent of all else with $ \P(I=i)=\frac{p_i}{\mu_n} $, then $ W_n^s=\sum_{j\neq I}X_j^{(I)}+1 $ has the size-bias distribution of $ W_n $.
\end{prop}
\section{Main result}~\label{chapter:main result} 
We consider the distribution of the number of isolated vertices in the preferential attachment model introduced in the previous section.
Here, we call a vertex isolated if it has neither incoming nor outgoing edges. We show that for a certain class of attachment functions this random variable fulfils a central limit theorem. More precisely, we show the following theorem:
\begin{theorem}
	\label{Thm:isolatedvertices}
 Denote by $ W_n $ the number of isolated vertices in the preferential attachment graph $ \cG_n $ described in Section~\ref{sec:model}. For attachment functions $ f $ with $ f(k)\leq \gamma k+1 $ for all $ k \in \N_0 $, %  $\max_{k\in \N} \Delta f(k) \leq \gamma  $ 
 some  $ \gamma \in (0,\frac{1}{2}) $ and  $ f(0) < 1 $, there exists a constant $ C \in (0, \infty ) $ such that
	\begin{equation}\label{Eq:mainThm}
		d_W(\widetilde{W}_n,Z)\leq C  \begin{cases}
		\frac{1}{ \sqrt{n}}&\text{ for }\gamma \in (0,\frac{1}{4}),\\[0.3cm]
	\sqrt{	\frac{\log(n)}{n}}&\text{ for }\gamma =\frac{1}{4},\\[0.3cm]
			n^{2\gamma-1}&\text{ for }\gamma \in (\frac{1}{4}, \frac{1}{2}).\\
		\end{cases}
	\end{equation}

\end{theorem}
\begin{remark}
	\begin{itemize}
		\item[(i)] The fact that Theorem~\ref{Thm:isolatedvertices} only yields convergence to a standard Gaussian distribution for parameters $ \gamma < \frac{1}{2} $  is due to the covariance and second moment bounds given in Lemma~\ref{le:Rni}. Though we do not show that these are tight, we do think that there might be a phase transition for the validity of a central limit theorem and that it might not hold true for attachment functions with very strong preference.  This would be in line with the situation in the Erdös-Rényi random graph, where the properly rescaled number of isolated vertices converges in distribution to a standard Gaussian distribution if and only if  $n^2p\rightarrow \infty$ and $np-\log(n)\rightarrow - \infty$, cf. \cite[Theorem 8]{Barbour89}. Also, the authors in \cite{Dereich2013} show that for linear attachment functions  a robust giant component exists if and only if  $ \gamma \geq \frac{1}{2} $, which shows that at least in the linear case the global network structure undergoes a phase transition at $ \gamma =\frac{1}{2}. $ This strengthens the conjecture of the existence of  a phase transition for the distribution of the number isolated vertices, however this is not covered in the present paper and is an open question to be dealt with.
		\item[(ii)]	Attachment functions fulfilling the assumptions in Theorem~\ref{Thm:isolatedvertices} and which define a random graph for which the bound on the right-hand side in \eqref{Eq:mainThm} tends to zero are for instance functions $f:\N\rightarrow \R$ of the form
		\begin{itemize}
			\item[(1)] $ f(k)=\alpha k +\beta $ with $ \alpha < \frac{1}{2} $.
			%\item[(2)]  $ f(k)=(k+1)^{\tau} $ with $ \tau < \frac{\log(3)-\log(2)}{\log(2)} $,
			\item[(2)] $ f(k)=\alpha(k+1)^{\tau} $ with $ \alpha < 1 $ and $ \tau <\frac{\log(3)-\log(2\alpha)}{\log(2)} $.
			\item[(3)]  $ f(k)= \alpha \log(k+1)+\beta $ with $ \beta\in (0,1)$ and $ \alpha<\frac{1}{2\log(2)} $.
		\end{itemize}
	\end{itemize}

\end{remark}
\section{Size-bias construction}\label{sec:SBconstruction}
For every $ i \in [n] $ we construct a random graph $ \cG_{n}^{(i)} $ on $ n $ vertices in which vertex $ i $ is isolated. We will  then couple its evolution to the evolution of $ \cG_{n} $ such that in the coupled graph $ \bar{\cG}_n^{(i)} $ the distribution of the number of isolated vertices is given by the size-bias distribution of the number of the same quantity in $ \cG_{n} $ while at the same time the two random graphs are close in a certain sense. More precisely, for Bernoulli random variables $ X_{k,n} $ and $X_{k,n}^{(i)}$ which equal one if vertex $ k $ is isolated in $ \cG_n $ or $ \bar{\cG}_{n}^{(i)} $, respectively, we will construct $ \bar{\cG}_{n}^{(i)} $ in such a way that 
\begin{equation}\label{eq:conditionsizebias}
\P(X_{k,n}^{(i)}=1)=\P(X_{k,n}=1\vert X_{i,n}=1).
\end{equation}
\subsection{Construction of $ \cG_{n}^{(i)} $}
We construct $ \cG_{n}^{(i)} $ in basically the same way as $ \cG_{n} $ with only minor changes in the connection probabilities. More precisely, let $ f $ be an attachment function as introduced in Section~\ref{sec:model}. 
 We start with $ \cG_{1}^{(i)} $ consisting of one vertex and no edges. At each discrete time step $ n $ we now insert vertex $ n $ into the network and connect it to any older vertex $ k \in [n-1] $ according to the rule
\begin{align}\label{connectionprobGi}
\P((n,k) \in \cGi_n \vert \di_{n-1}(k)=j)= \begin{cases}
\frac{f(j)}{n-1} \cdot  p_{n,j}^{(i)}& \text{ for } k <i,\\[0.2cm]
0 & \text{ for } k=i,\\[0.2cm]
\frac{f(j)}{n-1} &\text{ for } k> i,
\end{cases}
\end{align}
where $  p_{n,j}^{(i)}:=\frac{\P((i,k)\notin \cG_i \vert \d_n(k)=j+1)}{\P((i,k)\notin \cG_i \vert \d_{n-1}(k)=j)} . $\\
Here, 
we introduced the notation $ \d_{n}(k)=\ind_{n}(k) $ and $ \di_{n}(k)=\deg_{n}^{-,(i)}(k) $ for the indegree of vertex $ k $ in $\cG_{n}$ and $ \cG_{n}^{(i)}$, respectively. Note that for both graphs $ \cG_n $ and $ \cGi_n $ edges cannot be removed once inserted into the network, and at time $ n $ edges $ (m,k) $ cannot be added to the network for any $ n>m>k $. Thus $(m,k) \in \cG_n \Leftrightarrow (m,k) \in \cG_m$ and it does not matter which index we use, so that for ease of notation we will omit it whenever there is no need to include it. Lemma~\ref{lem:degreedistributions} now shows that the indegree distributions of vertices in $ \cG_n^{(i)} $ equal the conditional indegree distributions in $ \cG_n $ given that vertex $ i $ is isolated.
\begin{lemma}\label{lem:degreedistributions}
	Fix $ i \in [n] $. For any $ k\in [n], m \in\{k,\ldots, n\}  $ and $ j \in \{0,\ldots, m-k\} $ we have
	\begin{align}\label{eq:degreedistributions}
 \P(\di_{m}(k)=j)=\P(\d_{m}(k)=j \vert X_{i,n}=1)
	\end{align}
\end{lemma}
\begin{proof}
	 We first consider the case  $ m,k > i $. In this situation the rules for building new edges in $ \cG_n $ and $ \cG_n^{(i)} $ are identical, so that
	 \begin{align*}
	 \P(\di_m(k)=j)=\P(\d_m(k)=j)=\P(\di_m(k)=j\vert X_{i,n}=1)
	 \end{align*}
	 since the isolation of vertex $ i $ does not influence the indegree of vertices emerging later than time $ i $ (cf. independence structures \texttt{\textbf{Ia}} and \texttt{\textbf{Ic}}). For $ k=i $ and all $ m\in \{i,\ldots n\} $ we have
	\[
	\P(\di_m(i)=0)=1 =\P(\d_m(i)=0\vert X_{i,n}=1).
	\]
We are now left to deal with the case $ k \in \{1,\ldots, i-1\} $. Note that in this situation
 \begin{equation}\label{eq:reducedcondition}
 \P(\d_{m}(k)=j \vert X_{i,n}=1)= \P(\d_{m}(k)=j \vert (i,k)\notin \cG),
 \end{equation}
 see also independence structure \texttt{Ia}.
 	We will prove the claim via induction on $ m $. Before we do so, note that by the construction of $ \cG^{(i)} $ we have that for any $ k<m<i $ 
 \begin{align*}
& \P((m,k)\in \cG\vert (i,k) \notin \cG, \d_{m-1}(k)=j) \notag \\
  &=  \P((m,k)\in \cG\vert \d_{m-1}(k)=j) \ \frac{ \P((i,k) \notin \cG\vert \d_{m-1}(k)=j, (m,k)\in  \cG )}{\P((i,k)\notin \cG\vert \d_{m-1}(k)=j)}\notag\\[0.25cm]
 &= \P((m,k)\in \cG \vert \d_{m-1}(k)=j) \ \frac{ \P( (i,k)\notin \cG \vert \d_{m}(k)=j+1)}{\P((i,k)\notin \cG \vert \d_{m-1}(k)=j)}\notag\\
 &=\P((m,k)\in \cGi \vert \di_{m-1}(k)=j)
 \end{align*}
 and, for $ m>i $, 
 \begin{align}\label{eq:Connectionm>i}
 & \P((m,k)\in \cG\vert (i,k) \notin \cG, \d_{m-1}(k)=j) = \frac{f(j)}{m-1}= \P((m,k)\in \cGi\vert\di_{m-1}(k)=j).
 \end{align}
so that 
  \begin{align}\label{eq:condprob}
 & \P((m,k)\in \cG\vert (i,k) \notin \cG, \d_{m-1}(k)=j) = \P((m,k)\in \cGi\vert  \di_{m-1}(k)=j)  
 \end{align}
  for all $ m\in \{k+1,\ldots n\} $. We are now set to prove \eqref{eq:degreedistributions}.
  Since the constructions of both graphs do not allow for loops, we have $ \d_k(k)=\di_k(k)=0 $ almost surely and thus the statement is clear for $ m=k $. For $ m=k+1 $ we obtain
 \begin{align*}
 \P(\di_{k+1}(k)=0)&=\P((k+1,k)\notin \cGi\vert \di_k(k)=0)\\
 &= \P((k+1,k)\notin \cG\vert (i,k)\notin \cG, \d_{k}(k)=0) \\
 & =\P(\d_{k+1}(k)=0\vert (i,k)\notin \cG),
  \end{align*}
 where we used   %\eqref{connectionprobGi}  and 
 the fact that according to \eqref{eq:condprob}
 \begin{align*}
 \P((k+1,k)\notin \cG\vert (i,k) \notin \cG, \d_{k}(k)=0)%&=1-\P((k+1,k)\in \cG \vert (i,k) \notin \cG, \d_{k}(k)=0)\\
% &=1-\P((k+1,k)\in \cG^{(i)} \vert \d_{k}(k)=0)\\
 &= \P((k+1,k)\notin \cG^{(i)}\vert\di_{k}(k)=0)
 \end{align*}
  and  $\d_k(k) =0$ a.s.. Moreover,
  \begin{align*}
 \P(\di_{k+1}(k)=1)&=1-\P(\di_{k+1}(k)=0)=1-\P(\d_{k+1}(k)=0\vert (i,k)\notin \cG )\\
 &= \P(\d_{k+1}(k)=1\vert (i,k)\notin \cG),
  \end{align*}
 so that the claim holds for $ m=k+1 $ and $ j \in \{0,1\}. $ Assume now that \eqref{eq:degreedistributions} holds for  $ m-1 \in \{1, \ldots k-1\}$ and all $ j \in \{0,\ldots, m-k-1\}. $ Then, by construction of $ \cG^{(i)} $,
 \begin{align*}
 \P(\di_m(k)=j)&=\P((m,k)\notin \cGi\vert \di_{m-1}(k)=j)\P(\di_{m-1}(k)=j)\\[0.2cm]
 &\quad + \P((m,k)\in \cGi \vert \di_{m-1}(k)=j-1)\P(\di_{m-1}(k)=j-1)
 \end{align*}
for any $ j \in \{0,\ldots, m-k-1\}. $ Using the induction hypothesis and  \eqref{eq:condprob} twice yields
 \begin{align*}
  \P&(\di_m(k)=j)\\[0.25cm]
  &=  \P((m,k)\notin \cG\vert (i,k)\notin \cG, \d_{m-1}(k)=j)\, \P(\d_{m-1}(k)=j\vert (i,k)\notin \cG)\\
  &\quad +\P((m,k)\in \cG\vert (i,k)\notin \cG, \d_{m-1}(k)=j-1)\,\P(\d_{m-1}(k)=j-1\vert (i,k)\notin \cG)\\[0.25cm]
   & = \P(\d_{m-1}(k)=j, (m,k)\notin \cG\vert (i,k)\notin \cG)+ \P(\d_{m-1}(k)=j-1, (m,k)\in \cG \vert (i,k)\notin \cG)\\[0.25cm]
  &= \P(\d_{m-1}(k)=j,\d_{m}(k)=j  \vert (i,k)\notin \cG)+ \P(\d_{m-1}(k)=j-1, \d_{m}(k)=j\vert (i,k)\notin \cG)\\[0.25cm]
&  =\P(\d_m(k)=j\vert (i,k)\notin \cG)
 \end{align*}
for $ j \in \{0,\ldots, m-k-1\} .$ It remains to show that \eqref{eq:degreedistributions} also holds for $ j=m- k$. Using \eqref{eq:condprob} again shows that
  \begin{align*}
 \P(\di_m(k)=m-k)&=\prod_{r=k+1}^{m}\P((r,k)\in \cGi \vert \di_{r-1}(k)=r-k-1)\\&=\prod_{r=k+1}^{m}\P((r,k)\in \cG\vert (i,k)\notin \cG, \d_{r-1}(k)=r-k-1)\\
 &=\P\,\Big(\bigcap_{r=k+1}^m\big\{(r ,k) \in \cG\big\} \vert (i,k)\notin \cG\Big)= \P(\d_m(k)=m-k\vert (i,k)\notin \cG).
 \end{align*}
 This completes the proof for $ k \in \{1,\ldots, i-1\} $ and all $ m\geq k $, thus proving the Lemma.
\end{proof}
\begin{lemma}\label{lem:connectionprob}
	For a random graph $ \cG_n^{(i)} $ on $ n $ vertices constructed as outlined at the beginning of this section and a random graph $ \cG_n $ build according to the construction rules given in Section~\ref{sec:model} we have that for any $ m,k \in [n] $ with $ m >k $
	\begin{align}\label{eq:connectionprob}
	\P((m,k)\in \cG^{(i)})=\P((m,k)\in \cG \vert X_{i,n}=1).
	\end{align}
\end{lemma}
\allowdisplaybreaks
\begin{proof}
	For $ m,k \geq i $ the statement is clear due to the construction rules of $ \cG $ and $ \cGi $ given in \eqref{connectionprob} and \eqref{connectionprobGi}, respectively. Note that for $ i>m $
	\[
	\P((i,k)\notin \cG\vert \d_{m}(k)=\ell+1)=\P((i,k)\notin \cG\vert \d_{m-1}(k)=\ell, (m,k)\in \cG)
	\] 
 since connection probabilities depend on the indegree of the older vertex at the time of insertion of the younger vertex, but not on the wohle degree evolution. Combining this observation with
 Lemma~\ref{lem:degreedistributions} and Equation \eqref{connectionprobGi} 	 yields
	\begin{align*} 
	\P((m,k)\in \cGi)&=\sum_{\ell=0}^{m-k-1}\P((m,k)\in \cGi\vert \di_{m-1}(k)=\ell)\P(\di_{m-1}(k)=\ell)\\
	&=\sum_{\ell=0}^{m-k-1}\P(\d_{m-1}(k)=\ell\vert X_{i,n}=1) \ \frac{f(\ell)}{m-1} \, p_{m,\ell}^{(i)}\\
%	&\qquad  \times\frac{f(\ell)}{m-1}\frac{\P((i,k)\notin \cG\vert \d_{m-1}(k)=\ell, (m,k)\in \cG)}{\P((i,k)\notin \cG\vert \d_{m-1}(k)=\ell)} \\
		& =\sum_{\ell=0}^{m-k-1}\P(\d_{m-1}(k)=\ell\vert X_{i,n}=1)\P((m,k)\in \cG\vert \d_{m-1}(k)=\ell)\\
		&\qquad \qquad  \times\frac{\P((i,k)\notin \cG\vert \d_{m-1}(k)=\ell, (m,k)\in \cG)}{\P((i,k)\notin \cG\vert \d_{m-1}(k)=\ell)}\\
		&=\sum_{\ell=0}^{m-k-1}\P(\d_{m-1}(k)=\ell\vert X_{i,n}=1)\P((m,k)\in \cG\vert (i,k)\notin \cG, \d_{m-1}(k)=\ell)\\
		&=\sum_{\ell=0}^{m-k-1}\P(\d_{m-1}(k)=\ell\vert X_{i,n}=1)\P((m,k)\in \cG\vert X_{i,n}=1, \d_{m-1}(k)=\ell)\\
		& =\P((m,k)\in \cG\vert X_{i,n}=1),
	\end{align*}
		for $ k <m<i $. For $ m>i $ we combine Lemma~\ref{lem:degreedistributions} with Equation Stochastic geometry to generalize the Mondrian Process, joint with Ngoc Tran, to appear in SIAM Journal on Mathematics of Data Science, % \eqref{eq:reducedcondition} and
		 \eqref{eq:Connectionm>i} to obtain 
	\begin{align*}
		\P((m,k)\in \cGi)&=\sum_{\ell=0}^{m-k-1}\P((m,k)\in \cGi\vert \di_{m-1}(k)=\ell)\P(\di_{m-1}(k)=\ell)\\
	%	&=\sum_{\ell=0}^{m-k-1}\frac{f(\ell)}{m-1}\P(\d_{m-1}(k)=\ell\vert X_{i,n}=1)
	%\\
	&=\sum_{\ell=0}^{m-k-1}\P((m,k)\in\cG\vert \d_{m-1}(k)=\ell, (i,k)\notin \cG)\P(\d_{m-1}(k)=\ell\vert X_{i,n}=1)\\
		&=\sum_{\ell=0}^{m-k-1}\P((m,k)\in\cG\vert \d_{m-1}(k)=\ell,X_{i,n}=1)\P(\d_{m-1}(k)=\ell\vert X_{i,n}=1)\\
		&=\P((m,k)\in \cG_n\vert X_{i,n}=1).
	\end{align*}
\end{proof}
\subsection{Coupling of $ \cG_n $ and $ \cG_{n}^{(i)} $}\label{sec:coupling}
In this section we couple the degree evolutions of  $ \cG_{n}^{(i)} $ and $ \cG_n$. 
One can think of this coupling as a two stage process in which $ \cGi_n  $ is constructed from $ \cG_n $  in the following way:
we start by building the graph $ \cG_n $ according to the construction given in Section~\ref{sec:model}. Based on the whole evolution of $  \cG_n $ we can now successively construct $ \bar{\cG}_n^{(i)} $  for fixed $ i \in [n] $ in the following way:
starting with $ \bar{\cG}_1^{(i)} $ consisting of a single vertex and no edges, for every $ m \in [n]$ we construct $ \bar{\cG}_{m}^{(i)} $ based on  $ \bar{\cG}_{m-1}^{(i)} $ and the whole evolution of $ \cG_n $ according to the following rules:
\begin{itemize}
	\item[(i)] for any $ k \in [n]$ and $ m \in\{k+1,\ldots, n\} $: $(m,k) \notin \cG \Rightarrow (m,k) \notin \bar{\cG}^{(i)}$.
	 %edge $(m,k)$ is not present in $  \bar{\cG}^{(i)}  $ if it is not present in $ \cG $.
	\item[(ii)] at time $ i $ no edges are inserted into the network $  \bar{\cG}^{(i)}  $, i.e. $ (i,k) \notin  \bar{\cG}^{(i)}  $ for all $ k \in\{1,\ldots, i-1\} $. 
	\item[(iii)] for any $ m\in\{i+1,\ldots,n\} $ edge $ (m,i) $ is not inserted into the network $  \bar{\cG}^{(i)}  $, regardless of whether it exists in $ \cG $, i.e.$ (m,i) \notin \bar{\cG}^{(i)}$ for all $ m\geq i .$
	\item[(iv)] for $ m>k > i $:  $ (m,k) \in \bar{\cG}^{(i)} \ \Leftrightarrow \ (m,k) \in \cG.$
	% edge $ (m,k) $ is present in $  \bar{\cG}^{(i)}  $ if and only if $ (m,k) $ is present in $ \cG$.
		\item[(v)] for any $ k \in [i-1], m\geq k+1 $ and  $ (i,k) \notin  \cG $: $ (m,k) \in \bar{\cG}^{(i)} \ \Leftrightarrow \ (m,k) \in \cG.$
		%  edge $(m,k)$  is present in  $ \bar{\cG}^{(i)} $ if and only if  $( m,k)$ is present in $\cG $.
	\item[(vi)]   for any $ k \in [i-1], m\geq k+1 $ and  $ (i,k) \in  \cG $: if $ (m,k) \in \cG  $ and  $ \d_{m-1}(k)=j $ and $ \di_{m-1}(k)=\ell $, $(m,k)\in \bar{\cG}^{(i)}$ with probability
	\begin{align*}
	\pi_m^{(i)} (j,\ell)
	= \frac{f(\ell)}{f(j)} \begin{cases}
	p_{m,\ell}^{(i) }\cdot \frac{\P((i,k)\in \cG\vert \d_{m-1}=j)}{\P((i,k)\in \cG\vert \d_{m}=j+1)}& \text{ for } m <i,\\
	1& \text{ for } m >i.
	\end{cases}
	\end{align*}

\end{itemize}
\begin{remark}
	\begin{itemize}\, 
		\item[(i)]Note that incoming edges of vertex $ k \in \{1, \ldots, i-1\} $ depend on the existence of edge $ (i,k) $ in $ \cG $  which is why we need to construct  $ \cG $ (at least up to time $ i $) first before we can couple the evolution of $ \cG^{(i)} $ to it.
		
		\item[(ii)]According to the first item above the edge set of $ \bar{\cG}^{(i)} $ is a subset of that of $ \cG $ and one can think of this construction as building $ \cG^{(i)} $ based on $ \cG $ by reconsidering edges present in $ \cG$ that have been affected by the isolation of vertex $ i $ (so that we only reconsider incoming edges of vertices that were connected to vertex $ i $ in $ \cG $). Figure~\ref{Fig:coupledgraph} illustrates which edges are affected by the isolation of a vertex in this procedure.
		\begin{figure}[t]
			\centering
			\includegraphics[scale=0.5]{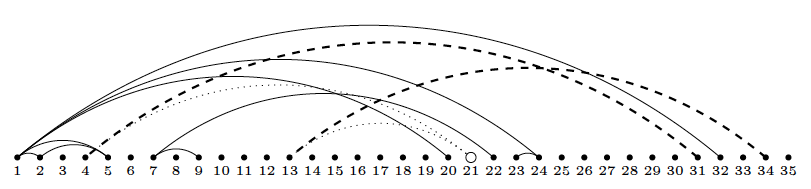}
			\caption{\small Edges deleted in $\bar{\cG}^{(21)} $ are depicted as dotted lines, edges affected and possibly deleted by the isolation are depicted as dashed lines. The solid lines represent edges not affected by the isolation of vertex 21 and are thus included in $ \bar{\cG}^{(21)} $.}
			\label{Fig:coupledgraph}
		\end{figure}
		
	\end{itemize}
\end{remark}

The calculations below now show that the probabilities of connecting vertices in $ \cGi_n $ and $ \bar{\cG}_n^{(i)} $ are identical. For ease of notation we define the event 
\[
\texttt{D}^{(i)}_{m}(k;j,\ell)= \{\cd_{m}(k)=j\} \cap \{\cdi_{m}(k)=\ell\}
\]
for $ j, \ell \in [n] $ and $ m >k $. Note that for $  j \neq \ell $ and $ m <i $
\begin{align*}
\P((m,k)\in \cG\vert \texttt{D}^{(i)}_{m-1}(k;j,\ell))= \P((m,k)\in \cG\vert \d_{m-1}(k)=j, (i,k)\in \cG )
\end{align*}
since the indegree of vertex $ k $ in $ \cG $ and $\bar{\cG}^{(i)}$ can only differ if $ (i,k) \in \cG $ and given this information and $ \{\d_{m-1}(k)=j\} $, the event $ \{\cdi_{m}(k)=\ell\} $ 
does not contribute any further information for the connection probability of $ m $ and $ k $ in $ \cG $. 
With this considerations and the construction rules (i)-(vi) for $\bar{\cG}$ given above, we obtain for 
any $ k \in [i-1], m \in \{k+1,\ldots,i-1\} $ and $ \ell,j \in \{0,\ldots,m-k-1\} $ with $  j \neq \ell   $, 
\begin{align}\label{eq:EdgeCouledGraphuneq}
&\P((m,k)\in \bar{\cG}^{(i)} \vert \texttt{D}^{(i)}_{m-1}(k;j,\ell))=\P((m,k)\in \bar{\cG}^{(i)}, (m,k)\in \cG \vert \texttt{D}^{(i)}_{m-1}(k;j,\ell))\notag\\[0.25cm]
&=\P((m,k)\in \bar{\cG}^{(i)}\vert  \texttt{D}^{(i)}_{m-1}(k;j,\ell), (m,k)\in \cG )\, \P((m,k)\in \cG\vert \texttt{D}^{(i)}_{m-1}(k;j,\ell))\notag\\[0.25cm]
&=\P((m,k)\in \bar{\cG}^{(i)}\vert  \texttt{D}^{(i)}_{m-1}(k;j,\ell), (m,k)\in \cG, (i,k)\in \cG )\, \P((m,k)\in \cG\vert \d_{m-1}(k)=j, (i,k)\in \cG )\notag\\[0.25cm]
&=\pi_m^{(i)}(j,\ell)\cdot \frac{\P((i,k)\in \cG\vert \d_{m}(k)=j+1)}{\P((i,k)\in \cG\vert \d_{m-1}(k)=j)}\cdot \P((m,k)\in \cG\vert \d_{m-1}(k)=j)\notag\\[0.25cm]
&=\frac{f(\ell)}{m-1} \cdot p_{m}^{(i)}(\ell)= \P((m,k)\in \cGi\vert \di_{m-1}(k)=\ell).
	\end{align}
	With similar arguments as above we get for $ j=\ell $ and $ m<i $
	\begin{align}\label{eq:EdgeCouledGrapheq}
	&\P((m,k)\in \bar{\cG}^{(i)} \vert \texttt{D}^{(i)}_{m-1}(k;\ell,\ell))=\P((m,k)\in \bar{\cG}^{(i)}, (m,k)\in \cG \vert \texttt{D}^{(i)}_{m-1}(k;\ell,\ell))\notag\\[0.25cm]
	&= \P((m,k)\in \bar{\cG}^{(i)}, (m,k)\in \cG\vert \texttt{D}^{(i)}_{m-1}(k;\ell,\ell), (i,k)\in \cG)\, \P((i,k)\in \cG \vert \texttt{D}^{(i)}_{m-1}(k;\ell,\ell))\notag\\
	&\quad + \P((m,k)\in \bar{\cG}^{(i)}, (m,k)\in \cG\vert \texttt{D}^{(i)}_{m-1}(k;\ell,\ell), (i,k)\notin \cG)\, \P((i,k)\notin \cG \vert \texttt{D}^{(i)}_{m-1}(k;\ell,\ell))\notag\\[0.25cm]
	&=\pi_{m}^{(i)}(\ell,\ell)\, \P((m,k)\in \cG\vert \d_{m-1}(k)=\ell, (i,k)\in \cG)\, \P((i,k)\in \cG \vert \texttt{D}^{(i)}_{m-1}(k;\ell,\ell))\notag\\
	&\quad +\P((m,k)\in \cG\vert \d_{m-1}(k)=\ell, (i,k)\notin \cG)\, \P((i,k)\notin \cG \vert \texttt{D}^{(i)}_{m-1}(k;\ell,\ell))\notag\\[0.25cm]
	&=\pi_{m}^{(i)}(\ell,\ell)\,\P((m,k)\in \cG\vert \d_{m-1}=\ell)\,\frac{\P((i,k)\in \cG\vert \d_m=\ell+1)}{\P((i,k)\in \cG\vert \d_{m-1}=\ell)} \P((i,k)\in \cG \vert \texttt{D}^{(i)}_{m-1}(k;\ell,\ell))\notag\\
	&\quad +\,p_{m,\ell}^{(i)}\, \P((m,k)\in \cG\vert \d_{m-1}=\ell)\, \P((i,k)\notin \cG \vert \texttt{D}^{(i)}_{m-1}(k;\ell,\ell))\notag\\[0.25cm]
	&=p_{m,\ell}^{(i)}\, \P((m,k)\in \cG\vert \d_{m-1}=\ell)= \P((m,k)\in \cGi\vert \di_{m-1}(k)=\ell),
	\end{align}
	where we used that  edge $ (m,k) $ needs to be present in $ \bar{\cG}^{(i)} $ if it is present in $ \cG  $% and vertex $ i $ is not a direct neighbour of vertex $ k $. Hence, we obtain for any $ k \in [i-1] $ and $ m  <i$ 
	\begin{align}\label{eq:edgecoupledgraphm<i}
&\P((m,k)\in \bar{\cG}^{(i)}\vert \cdi_{m-1}(k)=\ell)\notag \\
&=\sum_{ j=\ell}^{m-k-1}\P((m,k)\in \bar{\cG}^{(i)}\vert \texttt{D}^{(i)}_{m-1}(k;j,\ell)) \ \P(\cd_{m-1}(k)=j\vert \cdi_{m-1}(k)=\ell)\notag \\
& =\P((m,k)\in \cGi\vert \di_{m-1}(k)=\ell).
\end{align}
For $ m> i $ and $ j \neq \ell $ we have
\begin{align*}
\P((m,k)\in \cG\vert \texttt{D}^{(i)}_{m-1}(k;j,\ell))&= \P((m,k)\in \cG\vert \d_{m-1}(k)=j, (i,k)\in \cG )\\
&=\P((m,k)\in \cG\vert \d_{m-1}(k)=j)
\end{align*}
because, given the indegree of vertex $ k $ at time  $ m-1\geq  i $, the evolution of the indegree up to this time  is irrelevant for the connection probability. Similarly,
\begin{align*}
\P((m,k)\in \cG\vert \texttt{D}^{(i)}_{m-1}(k;\ell,\ell))&= \P((m,k)\in \cG\vert \d_{m-1}(k)=\ell, (i,k)\notin \cG )\\
&= \P((m,k)\in \cG\vert \d_{m-1}(k)=\ell )
\end{align*}
as the event $ \texttt{D}^{(i)}_{m-1}(k;\ell,\ell) $ implies that $ (i,k)\notin \cG  $ because otherwise the indegrees of $ k $ in $ \cG $ and $ \bar{\cG}^{(i)} $ would differ by at least one from time $ i $ on. Also, in this case edge $ (m,k) $ is present in $\bar{\cG}^{(i)}$ if and only if it is present in $ \cG $,  ie.
\begin{align*}
\P((m,k)\in \bar{\cG}^{(i)}\vert \texttt{D}^{(i)}_{m-1}(k;\ell,\ell))&=\P((m,k)\in \bar{\cG}^{(i)}, (m,k)\in \cG\vert \texttt{D}^{(i)}_{m-1}(k;\ell,\ell))\\
&= \P((m,k)\in \cG\vert \texttt{D}^{(i)}_{m-1}(k;\ell,\ell))=\frac{f(\ell)}{m-1} .
\end{align*}
For $\ell \neq j$ we have
\begin{align}\label{eq:EdgeCouledGraphLargem}
\P((m,k)\in \bar{\cG}^{(i)}\vert\texttt{D}^{(i)}_{m-1}(k;\ell,j)&=\P((m,k)\in \bar{\cG}^{(i)}, (m,k)\in \cG\vert\texttt{D}^{(i)}_{m-1}(k;\ell,j))\notag\\
&= \pi_{m-1}^{(i)}(j,\ell)\P((m,k)\in \cG\vert \d_{m-1}(k)=j)
\end{align}
so that for $ m>i  $ we obtain
\begin{align}\label{DegreesCoupledGraph}
&\P((m,k)\in \bar{\cG}^{(i)}\vert \cdi_{m-1}(k)=\ell)\notag\\
&\quad = \sum_{ j=\ell}^{m-k-1}\P((m,k)\in \bar{\cG}^{(i)}, (m,k) \in \cG\vert \texttt{D}^{(i)}_{m-1}(k;j, \ell))\P(\cd_{m-1}(k)=j\vert \cdi_{m-1}(k)=\ell) \notag\\
&\quad = \sum_{ j=\ell+1}^{m-k-1}\hspace{-2ex}\pi_{m-1}^{(i)}(j,\ell)\frac{f(j)}{m-1}\P(\cd_{m-1}(k)=j\vert \cdi_{m-1}(k)=\ell)\hspace{-0.25ex}+\hspace{-0.25ex}\frac{f(\ell)}{m-1}\P(\cd_{m-1}(k)=\ell\vert \cdi_{m-1}(k)=\ell) \notag\\
&\quad =\frac{f(\ell)}{m-1}=\P((m,k)\in \cGi\vert \di_{m-1}(k)=\ell).
\end{align}
Consequently, 
\[
\P((m,k)\in \bar{\cG}^{(i)})= \P((m,k)\in \cG^{(i)})
\]
and by proceeding in the same way as in the proof of Lemma~\ref{lem:degreedistributions}, we see that $\cdi_m(k)\stackrel{d}{=}\di_m(k) $ and $ \coi(k)\stackrel{d}{=}\oi(k) $ for any $ m,k \in [n] $. By a slight abuse of notation we will from now on write $ \di_m(k) $  and $ \oi(k)  $ when referring to the in- and outdegrees of vertices in the coupled graph.

\begin{prop}\label{prop:deletionprob}
	Fix $ i \in [n] $. Let $ \cG $ and $ \bar{\cG}^{(i)} $ be the coupled graphs as described above with attachment function $ f $ with $ f(k)\leq \gamma k +1 $ for all $ k \in \N_0  $ and some $ \gamma \in (0,1) $ %and set $ \gamma:= \max_{k \in \N} \Delta f(k)$.  For any $ m,k $  we have
	\begin{align*}
	\P((m,k)\notin \bar{\cG}^{(i)}, (m,k)\in \cG)&= \P((m,k)\in \cG)-\P((m,k)\in \cGi)\lesssim (mi)^{\gamma-1}k^{-2\gamma}.
	\end{align*}
\end{prop}
\begin{proof}
	Note that
	\begin{align*}
	\P((m,k)\notin \bar{\cG}^{(i)}, (m,k)\in \cG )&=\P((m,k)\in \cG)- \P((m,k)\in \bar{\cG}^{(i)}, (m,k)\in \cG )\\
	&= \P((m,k)\in \cG)-\P((m,k)\in \bar{\cG}^{(i)})\\
	&= 	 \P((m,k)\in \cG)-\P((m,k)\in \cG^{(i)})
	\end{align*}
	according to \eqref{DegreesCoupledGraph}.

	To prove the inequality, note that due to the construction of our coupling, we have 
	\[
	\{(m,k)\notin \bar{\cG}^{(i)}, (m,k)\in \cG \} \subset \{(i,k)\in \cG, (m,k)\in \cG \},
	\]
	so that by Lemmas~\ref{le:DM 2.8} and \ref{le:moment}
	\[
		\P((m,k)\notin \bar{\cG}^{(i)}, (m,k)\in \cG )\leq \P((i,k)\in \cG, (m,k)\in \cG) \leq \frac{f(1)}{f(0)}(mi)^{\gamma-1} k^{-2\gamma}.
	\]
\end{proof}
A straightforward result of the previous proposition is the following bound on the expected difference of the indegrees in the two graphs.
\begin{corollary}\label{le:indegreediff}
	Fix $ i \in [n] $ and let $ \Delta^{(i)}_{m} \textnormal{\texttt{d}}(k):=\ind_{m}(k)-\indi_m(k). $ 
	Then, for every $ k \in [i-1] $ and any $ m \geq k $,  we have
	\begin{align*}
	\E[\Delta_m^{(i)}\textnormal{\texttt{d}}(k)]\lesssim    m^\gamma i^{\gamma-1}k^{-2\gamma}.
	\end{align*}
\end{corollary}
\begin{proof}
	We have 
	\begin{align*}
	\E[\Delta_m^{(i)}\texttt{d}(k)]	&=\sum_{r=k+1}^m \P((r,k)\in \cG, (r,k)\notin \cG^{(i)})= \sum_{r=k+1}^m \P((r,k)\in \cG)- \P((r,k)\in \cG^{(i)}),
	\end{align*}
	
	so that by 
	Proposition~\ref{prop:deletionprob} 
	\begin{align*}
	\E[\Delta_m^{(i)}\texttt{d}(k)]	&= \sum_{r=k+1}^m \P((r,k)\in \cG)- \P((r,k)\in \cG^{(i)})\\
	& \lesssim \sum_{r=k+1}^m (ri)^{\gamma-1}k^{-2\gamma} \asymp  m^{\gamma}i ^{\gamma-1}k^{-2\gamma}
	\end{align*}
	according to \eqref{integraltest}.
\end{proof}
The next lemma shows, that the coupling described in this section is such that \eqref{eq:conditionsizebias} holds.
\begin{lemma}
	Let $ \cG_n $ and $ \bar{\cG}_n^{(i)} $ be two random graphs coupled as described above. Furthermore, let $ X_{k,n} $ and $ X^{(i)}_{k,n} $ denote Bernoulli random variables which equal one iff vertex k is isolated in $ \cG_n $ and $ \bar{\cG}_n^{(i)}$, respectively. We then have
	\begin{equation}\label{eq:sizebiascondition}
	\P(X_{k,n}^{(i)}=1)=\P(X_{k,n}=1\vert X_{i,n}=1).
	\end{equation}
\end{lemma}
\begin{proof}
	Remember that we write $ \oi(k)$ for the outdegree of vertex $ k  $ in $ \bar{\cG}_n^{(i)} $. Using  Lemma~\ref{lem:connectionprob}, Equations~\eqref{eq:edgecoupledgraphm<i}  and \eqref{eq:EdgeCouledGraphLargem} and exploiting independence structures \texttt{\textbf{Ia}} and \texttt{\textbf{Ib}} we obtain
	\begin{align*}
	\P(X_{k,n}^{(i)}=1)&=\P(\di_n(k)=0,\oi(k)=0)=\P(\di_n(k)=0)\P(\oi(k)=0)\\
	&=\prod_{r=1}^{k-1}\P((k,r)\notin\bar{\cG}^{(i)}) \prod_{r=k+1}^{n}\P((r,k)\notin \bar{\cG}^{(i)} \vert \di_{r-1}(k)=0)\\
	&=\prod_{r=1}^{k-1}\P((k,r)\notin\cG^{(i)}) \prod_{r=k+1}^{n}\P((r,k)\notin \cG^{(i)} \vert \di_{r-1}(k)=0)\\
	&=\prod_{r=1}^{k-1}\P((k,r)\notin\cG\vert X_{i,n}=1)  \prod_{r=k+1}^{i-1} \left(1-\frac{f(0)}{r-1}p_{r,0}^{(i)}\right)\ \prod_{r=\max\{i+1,k+1\}}^{n}\left(1-\frac{f(0)}{r-1}\right),
	\end{align*}
	For $ k>i $ we have 
	\[
	\P(\d_n(k)=0\vert X_{i,n}=1)=\P(\d_n(k)=0)= \prod_{r=k+1}^{n}\left(1-\frac{f(0)}{r-1}\right)
	\]
	proving the claim in this case. Using \eqref{eq:reducedcondition} and \eqref{Eq:OnlyState}, we see that for $ k <i	 $
	\begin{align}\label{eq:condoutdegree}
	\P(\d_n(k)=0\vert X_{i,n}=1)&= \P(\d_n(k)=0\vert (i,k) \notin \cG)
	%\prod_{r=k+1}^{n}\hspace{-0.2cm}\P((r,k)\notin \cG\vert \d_{r-1}(k)=0, X_{i,n}=1)
	\notag\\
	&= \prod_{r=k+1}^{n}\hspace{-0.2cm}\P((r,k)\notin \cG\vert \d_{r-1}(k)=0, (i,k)\notin \cG)\notag\\[0.2cm]
	&= \prod_{\substack{r=k+1}}^{i-1}\hspace{-0.2cm}\frac{\P((r,r-1)\notin \cG)\,\P((i,k)\notin \cG\vert\d_{r}(k)=0)}{\P((i,k)\notin \cG\vert\d_{r-1}(k)=0)}\,\prod_{r=i+1}^{n}\hspace{-0.2cm}\Big(1-\frac{f(0)}{r-1}\Big)\notag\\
	&=\prod_{r=k+1}^{i-1}\hspace{-0.2cm}\Big(1-\frac{f(0)}{r-1}\Big)\frac{\P((i,r)\notin \cG)}{\P((i,r-1)\notin \cG)}\,\prod_{r=i+1}^{n}\hspace{-0.2cm}\Big(1-\frac{f(0)}{r-1}\Big)
	\end{align}
and 
	\begin{align}\label{eq:condindegree1}
	&\Big(1-\frac{f(0)}{r-1}\Big)\frac{\P((i,r)\notin \cG)}{\P((i,r-1)\notin \cG)}=\frac{\P((i,r)\notin \cG)}{\P((i,r-1)\notin \cG)}-\frac{f(0)}{r-1}\frac{\P((i,r)\notin \cG)}{\P((i,r-1)\notin \cG)}\notag\\
\notag	\\
	&=1+\frac{\P((i,r-1)\in \cG)-\P((i,r)\in \cG)-\P((r,r-1)\in \cG)\P((i,r)\notin \cG)}{\P((i,r-1)\notin \cG)}\notag\\
	&=1+\frac{f(0)}{r-1}\frac{\big(\P((i,r-1)\in \cG\vert \d_r(r-1)=1)-\P((i,r)\in \cG)-\P((i,r)\notin \cG))}{\P((i,r-1)\notin \cG)}\notag\\
	&=1-\frac{f(0)}{r-1}\frac{\P((i,r-1)\notin \cG\vert \d_r(r-1)=1)}{\P((i,r-1)\notin \cG)}= 1-\frac{f(0)}{r-1}p_{r,0}^{(i)},
	\end{align}
	so that also in the case $ k<i $
	\begin{align*}
	\P(X_{k,n}^{(i)}=1)	&=\prod_{r=1}^{k-1}\P((k,r)\notin\cG_n\vert X_{i,n}=1) \prod_{r=i+1}^{n}\left(1-\frac{f(0)}{r-1}\right) \prod_{r=k+1}^{i-1} \left(1-\frac{f(0)}{r-1}p_{r,0}^{(i)}\right)\\
	\\	
	&=\P(\o(k)=0\vert X_{i,n}=1) \P(\d_n(k)=0\vert X_{i,n}=1)\\
	&=\P(X_{k,n}=1\vert X_{i,n}=1).
	\end{align*}
\end{proof}
The next lemma now shows, that the number of isolated vertices in the graph $ \bar{\cG}^{(i)}_n $ indeed follows the size-bias distribution of the number of isolated vertices in the original graph $ \cG_n $.
\begin{lemma}
	Let $ W_n=\sum_{k=1}^{n}X_{k,n} $ denote the number of isolated vertices in $ \cG_n $ and set $ \mu_n=\E[W_n] $. Furthermore, let $ W_n^s $ be a random variable having the size-bias distribution of $ W_n $ and denote by $ W_n^{(i)}=\sum_{k=1}^{n}X_{k,n}^{(i)} $ the number of isolated vertices in $ \bar{\cG}_n^{(i)} $. For $ W_n^{(I)} $, where $\P( I=i)= \frac{\P(X_{i,n}=1)}{\mu_n} $, we then have that
	\begin{align*}
	W_n^{(I)}\stackrel{\mathcal{D}}{=}W_n^s.
	\end{align*}
\end{lemma}
\begin{proof}
	For Bernoulli random variables fulfilling \eqref{eq:sizebiascondition} a general proof can for example be found in \cite[Proposition 3.21]{Ross2011}. To make this paper more self-contained we give a slightly adapted proof here. 
	In order to prove the above we show that  equation \eqref{eq:sizebiaseq} in Definition~\ref{def:size-bias} holds with $ X=W_n $ and $ X^s=W_n^{(I)} $. For any $ f $ such that $ \E\left[W_nf(W_n)\right]< \infty $ we have
	\begin{align*}
	\E\left[W_nf(W_n)\right]&=\sum_{i=1}^n\E\left[X_{i,n}f(W_n)\right]=\sum_{i=1}^n\E\left[X_{i,n}\E\left[f(W_n)\vert X_{i,n}\right]\right]\\
	&=\sum_{i=1}^n\P(X_{i,n}=1)\E\left[f(W_n)\vert X_{i,n}=1\right]\\
	&=\sum_{i=1}^n\P(X_{i,n}=1)\E\big[f\big(W_n^{(i)}\big)\bigl]
	\end{align*}
	and 
	\begin{align*}
	\E\big[f\big(W_n^{(I)}\big)\big]=\frac{1}{\mu_n}\sum_{i=1}^n \P(X_{i_n}=1) \E\big[f\big(W_n^{(i)}\big)\big],
	\end{align*}
	so that 
	\begin{align*}
	\E\left[W_nf(W_n)\right]&= \sum_{i=1}^n\P(X_{i,n}=1)\E\big[f\big(W_n^{(i)}\big)\big]=\mu_n\E\big[f\big(W_n^{(I)}\big)\big].
	\end{align*}
\end{proof}

\section{Proof of Theorem~\ref{Thm:isolatedvertices}}\label{sec:proofMainresult} 
To prove our main result we  need to bound the two terms appearing in  \eqref{Thmbound}, i.e.
\begin{align}\label{eq:termstobound}
\frac{\mu_n}{\sigma_n^2}\sqrt{\frac{2}{\pi}}\sqrt{\Var(\E\left[W_n^s-W_n\vert \mathcal{G}_n\right])} \quad
\text{ and }\quad
\frac{\mu_n}{\sigma_n^3}\E\left[(W_n^s-W_n)^2\right],
\end{align}
where $ \mu_n=\E[W_n] $ and $ \sigma_n^2=\Var[W_n] $.

To bound these expressions, note that by the construction of $ W_n^s$ we have
\begin{equation}\label{eq:diff_sizebias}
W_n^s-W_n=D_{n,I} +{\bf 1}\{\texttt{D}_n(I)>0\}+ R_{n,I},
\end{equation}
where $ D_{n,I}=|\mathsf{D}_{n,I}| $ and $ \mathsf{D}_{n,I} $ denotes the set of  neighbours of vertex $ I $ with total degree one (i.e. $I $ is their unique neighbour),  $ \texttt{D}^{(i)}_n(I) $ gives the total degree (i.e. the sum of in- and outdegree) of vertex $ I $, and $ R_{n,I} =|\mathsf{R}_{n,I}|  $, where  $ \mathsf{R}_{n,I} $ refers to the set of vertices that are not in $ \mathsf{D}_{n,I} $ and  which  are isolated in $ \bar{\cG}^{(I)} $ but not in $ \cG. $
 From \eqref{eq:diff_sizebias} we see that in order to bound the terms in \eqref{eq:termstobound} we need to control the first and second order properties of $ W_n $, $ D_{n,I} $ and $ R_{n,I} $. Bounds for these are given in Lemmas~\ref{le:orderEV},~\ref{le:Dni} and \ref{le:Rni}, respectively. With these at hand we  then deduce upper bounds on the two terms given above in Lemmas~\ref{le:variancebound} and \ref{le:squareddistance}, which will be used to prove Theorem~\ref{Thm:isolatedvertices}. 
\begin{lemma}\label{le:orderEV}
	Let $ W_n $ denote the number of isolated vertices in the preferential attachment graph $ \cG_n $ described in Section~\ref{sec:model}. 
 For any attachment function $ f $ with $f(k)\leq \gamma k +1$ for some $ \gamma \in (0,1)$ and $ f(0)<1 $, we then have that
		\begin{align}\label{eq:mun}
		 \E\left[W_n\right]  \asymp  n 
		\end{align}
			and   $$\Var\left[W_n\right]\, = \mu_n \big(\E[R_{n,I}+D_{n,I}+\1\{\textnormal{\texttt{D}}_n(I)>0\}]\big) \geq c_v\, \mu_n$$
			for some constant $ c_v>0 $ independent of $ n $.	
\end{lemma}
	\begin{lemma}\label{le:Dni}
		Let $ D_{n,I} $ denote the number of neighbours of vertex $ I $ with total degree one in $ \cG_n $. For any attachment rule with $f(k)\leq \gamma k +1$ for some $ \gamma \in (0,1)$ and $ f(0)<1 $, we then have that
		\begin{align*}
		\E[D_{n,I}] \lesssim &\  1 \quad \text{ and } \quad \E[D_{n,I}^2] \lesssim\  \begin{cases}
		1 &\text{  for } \gamma \leq \frac{1}{2},\\
	%	\log(n)&\text{  for } \gamma = \frac{1}{2},\\
		n^{2\gamma-1}&\text{  for } \gamma > \frac{1}{2}.\\
		\end{cases}
		\end{align*}
	
		Furthermore,
		\begin{align*}
		\sum_{i=1}^n \sum_{j=1}^{i-1}	\Cov(D_{n,i},D_{n,j})\  \lesssim \,	n
		\end{align*}
	\end{lemma}
\begin{lemma}\label{le:Rni}
	Denote by  $ R_{n,I} $ the number of isolated  vertices in $ \cG_{n}^{(I)} $ which are neither isolated in $ \cG_n $ nor contained in $ \mathcal{D}_{n,I} $. For any attachment rule with $f(k)\leq \gamma k +1$ for some $ \gamma \in (0,1)$ and $ f(0)<1 $, we have 
	\begin{align}\label{MomentsRni}
	\E[R_{n,I}]&\ \lesssim  \ 
	\begin{cases}
	1 &\text{ for }\gamma <\frac{1}{2},\\
	\log(n) &\text{ for }\gamma =\frac{1}{2},\\
	n^{2 \gamma-1}&\text{ for }\gamma > \frac{1}{2},\\
	\end{cases}	
	\end{align}
	and
	\begin{align}\label{2ndmomentRni}
	 \E[R_{n,I}^2]&\ \lesssim\
	 \begin{cases}
	1 &\text{ for }\gamma <\frac{1}{3},\\
	\log(n) &\text{ for }\gamma =\frac{1}{3},\\
	n^{3\gamma-1}&\text{ for }\gamma > \frac{1}{3}.\\
	\end{cases}
\end{align}	
	Furthermore,
\begin{align*}
\sum_{i=1}^n \sum_{j=1}^{i-1}	\Cov(R_{n,i},R_{n,j})
&\lesssim \begin{cases}
n &\text{ for } \gamma <\frac{1}{4},\\
n \log(n)&\text{ for } \gamma =\frac{1}{4},\\
n^{4\gamma} &\text{ for } \gamma >\frac{1}{4}.
\end{cases}
\end{align*}
\end{lemma}
The lemmas above are used to derive the following bounds on the two terms appearing in \eqref{eq:termstobound}.
\begin{lemma}\label{le:variancebound}
	For $ W_n^s $ having the size-bias distribution of $ W_n $, there exists a constant $ C>0 $, independent of $ n $, such that
	\begin{equation*}
	\Var[\E\left[W_n^s-W_n\vert \cG_n\right]] \leq \left(\frac{2\sigma_n}{\mu_n}\right)^2+\frac{C}{\mu_n^2}~\begin{cases}
	n&\mbox{ for } \gamma<\frac{1}{4},\\
	n \log(n)&\mbox{ for } \gamma=\frac{1}{4},\\
	n^{4\gamma}&\mbox{ for } \gamma>\frac{1}{4}.\\
	\end{cases}
	\end{equation*}
	
\end{lemma}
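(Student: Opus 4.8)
The plan is to pass to the $\cG_n$-conditional expectation in the decomposition \eqref{eq:diff_sizebias} and bound the variance of each of the three resulting random variables separately, then to recombine the estimates through the triangle inequality $\sqrt{\Var(\xi_1+\xi_2+\xi_3)}\le\sqrt{\Var(\xi_1)}+\sqrt{\Var(\xi_2)}+\sqrt{\Var(\xi_3)}$. Since $I$ is drawn, independently of everything else, with $\P(I=i\mid\cG_n)=\vartheta_{i,n}/\mu_n$, and since, conditionally on $\cG_n$ and on $\{I=i\}$, both the deletion of the edges at $i$ and the subsequent adjustment are explicit randomisations whose conditional means are functions of $\cG_n$ alone, each of the three conditional expectations takes the form $\mu_n^{-1}\sum_{i=1}^n\vartheta_{i,n}\,h_i(\cG_n)$ with a bounded functional $h_i$ of $\cG_n$, and we must estimate the variance of such weighted sums.

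For the indicator term we use that $\mathbf{1}\{d_n^{(I)}>0\}=1-X_{I,n}$, so $\E[\mathbf{1}\{d_n^{(I)}>0\}\mid\cG_n]=1-\mu_n^{-1}\sum_{i=1}^n\vartheta_{i,n}X_{i,n}$. Because $0\le\vartheta_{i,n}=\E[X_{i,n}]\le1$ and all covariances $\Cov(X_{i,n},X_{j,n})$ are nonnegative --- this is precisely the inequality \eqref{poscor} established in the proof of Lemma~\ref{le:orderEV} --- we get $\Var(\sum_i\vartheta_{i,n}X_{i,n})=\sum_{i,j}\vartheta_{i,n}\vartheta_{j,n}\Cov(X_{i,n},X_{j,n})\le\sum_{i,j}\Cov(X_{i,n},X_{j,n})=\sigma_n^2$, so this term contributes at most $\sigma_n^2/\mu_n^2$. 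For the term $\E[D_{n,I}\mid\cG_n]=\mu_n^{-1}\sum_i\vartheta_{i,n}D_{n,i}$, where $D_{n,i}$ is the number of degree-one neighbours of $i$, I would rewrite $\sum_i\vartheta_{i,n}D_{n,i}=\sum_{j:\,d_n(j)=1}\vartheta_{\nu(j),n}$ as a $\vartheta$-weighted count over degree-one vertices $j$ (with $\nu(j)$ the unique neighbour), expand its variance into a diagonal part, bounded by $\sum_j\P(d_n(j)=1)\le Cn$, and an off-diagonal part $\sum_{j\neq j'}\Cov(\cdot,\cdot)$, and estimate the latter via \eqref{eq:ProbOfEdge} together with Lemmas~\ref{le:moment} and \ref{le:orderElinear} (to control the joint probability that two prescribed vertices each have degree exactly one). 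This yields a second $\sigma_n^2/\mu_n^2$-type contribution together with a lower-order, $\gamma$-dependent remainder $\tfrac{C}{\mu_n^2}\cdot(\text{case})$; the two $\sigma_n^2/\mu_n^2$ contributions combine, through the triangle inequality, into the stated $(2\sigma_n/\mu_n)^2$.

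For the cascade term $R_{n,I}$ I would use that, conditionally on $\cG_n$ and $\{I=i\}$, every edge incident to a vertex $\ell\notin\Dcal_{n,i}$ is removed independently with a probability controlled by \eqref{deletionprob}, hence at most $\tfrac{f(1)}{f(0)}\,i^{\gamma-1}\ell^{-\gamma}$, so that $\P(X_\ell^{(i)}=1\mid\cG_n,\,I=i)$ is a product of at least one, and for $d_n(\ell)\ge2$ at least two, such small factors. Bounding $\E[R_{n,I}\mid\cG_n]$ by the corresponding weighted sum of these products over $\ell$ and $i$, and its second moment by the analogous double sums over pairs $(\ell,\ell')$ and $(i,i')$, reduces the estimate to elementary sums of powers of integers which, via Lemma~\ref{le:moment} and the asymptotics of partial sums $\sum_k k^{-\beta}$, produce a bound of the form $\tfrac{C}{\mu_n^2}\cdot(\text{case})$, with the exponent $6\gamma-2$ emerging for $\gamma>\tfrac12$.

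The main obstacle is the off-diagonal (covariance) contribution to $\Var(\E[D_{n,I}+R_{n,I}\mid\cG_n])$: one has to show that the $O(n^2)$ pair terms sum to the much smaller stated quantity. This forces one to control, for two fixed vertices, the joint law of the events that both become (near-)isolated and that the relevant edges along both their adjustment neighbourhoods are deleted, combining the sharp deletion bound \eqref{deletionprob}, the indegree-moment estimates of Lemmas~\ref{le:moment}--\ref{le:orderElinear}, and the positivity of covariances. The delicate point is that for $\gamma>\tfrac12$ the number of vertices whose isolation can be triggered grows polynomially in $n$, so the sums only barely converge at the required rate; obtaining the precise exponent $6\gamma-2$ (and, when this lemma is later fed into Theorem~\ref{Thm:isolatedvertices} for general $f$, the threshold that makes the final bound nontrivial only for $f(k)<\tfrac58 k+1$) is the crux of the argument.
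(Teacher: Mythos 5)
Your proposal follows essentially the same route as the paper: it decomposes $W_n^s-W_n$ via \eqref{eq:diff_sizebias}, bounds the variance of each of the three conditional expectations separately, uses the positive correlation of the $X_{i,n}$ to bound the indicator term by $\sigma_n^2/\mu_n^2$, reindexes $\sum_i\vartheta_{i,n}D_{n,i}$ over degree-one vertices, and reduces the cascade term $R_{n,I}$ to the deletion probabilities \eqref{deletionprob}, with the dominant $n^{6\gamma-2}$ coming from the cross-covariances of the $R_{n,i}$ --- exactly the strategy of Lemmas~\ref{le:varianceDni}--\ref{le:covariance}. The only cosmetic difference is the bookkeeping of the leading constant: the paper obtains $(2\sigma_n/\mu_n)^2$ as $4\sigma_n^2/\mu_n^2$ from a single application of $\Var(\xi_1+\xi_2+\xi_3)\leq 4\sum_k\Var(\xi_k)$, rather than from two separate $\sigma_n/\mu_n$ contributions added through the triangle inequality for standard deviations.
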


\begin{lemma}\label{le:squareddistance}
	For $ W_n $ denoting the number of isolated vertices in a preferential attachment graph $ \cG_n $ described in Section~\ref{sec:model}  and $ W_n^s $ having the size-bias distribution of $ W_n $, there exists a constant $ C $ independent of $ n $ such that
	\begin{equation*}
	\E\left[(W_n^s-W_n)^2\right]\leq C \begin{cases}
	1 &\text{ for }\gamma <\frac{1}{3}\\
	\log(n) &\text{ for }\gamma =\frac{1}{3}\\
	n^{3\gamma-1}&\text{ for }\gamma >\frac{1}{3}\\
	\end{cases}
	\end{equation*}
	
\end{lemma}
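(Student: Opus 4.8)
The plan is to start from the decomposition \eqref{eq:diff_sizebias}, apply the elementary inequality $(a+b+c)^2\le 3(a^2+b^2+c^2)$ together with $\1\{\cdot\}^2=\1\{\cdot\}$, and then condition on the index $I$, which satisfies $\P(I=i)=\vartheta_{i,n}/\mu_n$. This gives
\[
\E\big[(W_n^s-W_n)^2\big]\ \le\ \frac{3}{\mu_n}\sum_{i=1}^n\vartheta_{i,n}\Big(\E[D_{n,i}^2]+\P(d_n(i)>0)+\E[R_{n,i}^2]\Big),
\]
so, since $\mu_n\approx n$ by Lemma~\ref{le:orderEV}(i), it remains to bound the three sums. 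The middle one is immediate: $\sum_i\vartheta_{i,n}\P(d_n(i)>0)\le\sum_i\vartheta_{i,n}=\mu_n\le Cn$. Throughout I will use $\vartheta_{i,n}=\P(\deg^-_n(i)=0)\,p_{i,0}\approx (i/n)^\eta$, which follows from \eqref{eq:indegree0} and $p_{i,0}\in[p_0,\tilde p_0]$.

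For the $D$-term, note that a neighbour of $i$ of total degree one is in particular a neighbour of $i$, so $D_{n,i}\le d_n(i)=\deg^-_n(i)+\deg^+(i)$. A one-step recursion for $\E[\deg^-_m(i)^2]$ (expand the square of $\deg^-_{m-1}(i)$ plus a $\mathrm{Ber}(f(\deg^-_{m-1}(i))/(m-1))$ increment, use $kf(k)\le\gamma k^2+k$, and feed in Lemma~\ref{le:moment}) gives $\E[\deg^-_n(i)^2]\le C(n/i)^{2\gamma}$; since the out-edges of $i$ are placed independently with connection probabilities summing to $O(1)$, the analogous estimate is $\E[\deg^+(i)^2]=O(1)$. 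Hence $\E[D_{n,i}^2]\le C(n/i)^{2\gamma}$, and $\sum_i\vartheta_{i,n}\E[D_{n,i}^2]\approx n^{2\gamma-\eta}\sum_{i=1}^n i^{\eta-2\gamma}$, which is $O(n)$ for $\gamma<(1+\eta)/2$ (in particular for all $\gamma\le\tfrac12$) and $O(n^{2\gamma-\eta})$ otherwise; since $\gamma>\tfrac12$ forces $2\gamma-\eta\le 4\gamma-1$, this stays inside the claimed bounds in every regime.

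The main work is the $R$-term. I would bound $R_{n,i}$ by twice the number $N_i$ of non-adjacent edges of $\cG_n$ that get deleted in the construction of $\cG_n^{(i)}$: each vertex counted by $R_{n,i}$ has all its edges deleted, at least one of them being non-adjacent to $i$ (else it would lie in $\mathcal{D}_{n,i}$), so it is an endpoint of a deleted non-adjacent edge, and an edge has at most two endpoints; thus $R_{n,i}\le 2N_i$ and $\E[R_{n,i}^2]\le 4\E[N_i^2]$. By the analysis preceding \eqref{deletionprob}, an edge $\{k\to\ell\}$ with $k>\ell$, $k\ne i$, can be deleted only if $\ell<i$, and it is then removed — conditionally on $\cG_n$ independently of every other edge — with a deterministic probability at most $\frac{f(1)}{f(0)}i^{\gamma-1}\ell^{-\gamma}$. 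Splitting $N_i=\sum_{\ell<i}N_{i,\ell}$ by the older endpoint $\ell$ and using $\E[\deg^-_n(\ell)]\le C(n/\ell)^\gamma$ and $\E[\deg^-_n(\ell)^2]\le C(n/\ell)^{2\gamma}$ yields
\[
\E[N_i]\ \le\ C\,i^{\gamma-1}n^\gamma\sum_{\ell=1}^{i-1}\ell^{-2\gamma}\ \approx\
\begin{cases}(n/i)^\gamma,&\gamma<\tfrac12,\\[3pt](n/i)^{1/2}\log i,&\gamma=\tfrac12,\\[3pt] i^{\gamma-1}n^\gamma,&\gamma>\tfrac12,\end{cases}
\]
while the conditional independence of the $N_{i,\ell}$ (and of the Bernoulli deletions within each) gives $\E[N_i^2]\le C\big(\E[N_i]+\E[\,\E[N_i\mid\cG_n]^2\,]\big)$, the last term being controlled, via $\E[\deg^-_n(\ell)\deg^-_n(\ell')]\le C\,\E[\deg^-_n(\ell)]\,\E[\deg^-_n(\ell')]$ for $\ell\ne\ell'$, by $C$ times the square of the right-hand side above. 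Summing these estimates against $\vartheta_{i,n}\approx(i/n)^\eta$ — and checking that $\eta-2\gamma>-1$ when $\gamma\le\tfrac12$ and $\eta+2\gamma-2>-1$ when $\gamma>\tfrac12$ — makes $\sum_i\vartheta_{i,n}\E[N_i]$ and $\sum_i\vartheta_{i,n}\E[N_i]^2$ come out as $O(n)$, $O(n\log(n)^2)$ and $O(n^{4\gamma-1})$ in the three regimes, which together with the $D$-bound proves the lemma.

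I expect the delicate point to be the second-moment estimate for $N_i$, specifically the auxiliary bound $\E[\deg^-_n(\ell)\deg^-_n(\ell')]\le C\,\E[\deg^-_n(\ell)]\,\E[\deg^-_n(\ell')]$ for distinct vertices: it rules out a harmful positive correlation between two indegrees and should be provable by the same kind of one-step recursion used for the second moment (the inhomogeneity being of order $\E[\deg^-_m(\ell)]/m$, which integrated against the growth factor $(n/m)^{2\gamma}$ is dominated near $m\approx\max(\ell,\ell')$ and hence reproduces $(n/\ell)^\gamma(n/\ell')^\gamma$), but it needs its own short argument. A secondary bookkeeping subtlety is that for $\gamma=\tfrac12$ one must retain the per-$i$ factors rather than invoke the summed bound of Lemma~\ref{le:varianceRni}, so that the weight $\vartheta_{i,n}$ absorbs one logarithm and the final rate is $n\log(n)^2$ and not $n\log(n)^3$.
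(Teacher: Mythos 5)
Your argument is correct, and for the two substantive terms it takes a genuinely different route from the paper. The skeleton is the same (the decomposition \eqref{eq:diff_sizebias} followed by conditioning on $I$), but you replace the paper's full expansion of the square by $(a+b+c)^2\le 3(a^2+b^2+c^2)$, which dispenses with the separate cross-term estimates $\E[D_{i,n}R_{n,i}]$ and $\E[\1\{d_n^{(i)}>0\}R_{n,i}]$ that the paper works out. For the $D$-term the paper writes $D_{i,n}=\sum_j S_{ij}^{(n)}$ with the indicators \eqref{Def:Sij} and computes $\E[S_{ij}^{(n)}S_{ik}^{(n)}]$ case by case, obtaining the sharp bound $\sum_i\E[D_{i,n}^2]\le Cn$; your crude domination $D_{n,i}\le d_n(i)$ together with $\E[(\deg_n^-(i))^2]\le C(n/i)^{2\gamma}$ loses this sharpness for $\gamma>\frac12$ (you get $n^{2\gamma-\eta}$ rather than $n$) but, as you verify, still fits under $n^{4\gamma-1}$. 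For the dominant $R$-term the paper conditions on the first connection of each vertex and estimates $\P(Z_{n,m}^{(i)}=1\mid Z_{n,j}^{(i)}=1)$ as in \eqref{conditionprobZklii}; your domination $R_{n,i}\le 2N_i$ by the number of randomly deleted non-adjacent edges, combined with the conditional independence of the Bernoulli deletions given $\cG_n$ and the deterministic bound \eqref{deletionprob}, reaches the same rates while avoiding that case analysis and Proposition~\ref{prop:isolation} altogether. One remark: the point you flag as delicate is in fact immediate. In this model the edge indicators $\{m\rightarrow\ell\}_{m>\ell}$ for distinct targets $\ell$ are driven by disjoint families of the underlying randomness, since the probability of $m\rightarrow\ell$ depends on $\cG_{m-1}$ only through $\deg^-_{m-1}(\ell)$; hence $\deg^-_n(\ell)$ and $\deg^-_n(\ell')$ are exactly independent for $\ell\ne\ell'$ (dependence arises only between edges pointing to the \emph{same} vertex, which is what Proposition~\ref{prop:ajlLinear} quantifies), so your correlation inequality holds with $C=1$ and the second-moment bound $\E[N_i^2]\le C(\E[N_i]+\E[N_i]^2)$ follows at once.
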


With these auxiliary results we are finally ready to prove our main result Theorem~\ref{Thm:isolatedvertices}.
\begin{proof}[Proof of Theorem \ref{Thm:isolatedvertices}]
Plugging the bounds  given in Lemmas~\ref{le:orderEV}, \ref{le:variancebound} and \ref{le:squareddistance} into Equation \eqref{Thmbound} proves the claim.
\end{proof}
\section{Proofs of auxiliary Lemmas}\label{sec:proofs}
\subsection{Proof of Lemma~\ref{le:orderEV}}
\begin{proof}
	Due to independence structure \texttt{\textbf{Ib}} we have
	\begin{align}\label{eq:Expectation}
	\E[W_n]=\E[\sum_{i=1}^n X_i]=\sum_{i=1}^n \P(\ind_n(i)=0)\ \P(\out(i)=0).
	\end{align}
	Now, for every $ i \in [n] $,
	\begin{align}\label{eq:Variance}
	\P(\ind_n(i)=0)= \prod_{\ell=i+1}^n\Big(1-\frac{f(0)}{\ell-1}\Big) \asymp \big(\frac{i}{n}\big)^\eta
	\end{align}
	where $ \eta:= f(0) $.
	According to \cite[Theorem 1.6]{BDO18}, the outdegree asymptotically follows a Poisson distribution with parameter $ \lambda \in (0, \infty)$. Hence for every $\varepsilon> 0$ there exists $ N=N(\varepsilon) \in \N  $ such that for all $ n \geq N  $
	\[
	\P(\out(n)=0)\geq (1-\varepsilon) e^{-\lambda},
	\]	
	in particular, there exists $N^\ast$ such that for all $n \geq N^\ast$
	\[
	\P(\out(n)=0)\geq\frac{1}{2}e^{-\lambda},
	\]	
	Furthermore, for any fixed $ k \in \N $ we have
	\[
	p_{0,k}:=\P(\out(k)=0)= \prod_{\ell=1}^{k-1}\Big(1-\frac{\E[\ind_{k-1}(\ell)]}{k-1}\Big) > 0.
	\]
	Thus, for all $ k \in \N_{\geq 2} $
		\begin{align}\label{eq:outdegree0}
	p_{0,k}=\P(\out(k)=0)\geq \min\{p_{0,1}, \ldots, p_{0,N^\ast}, \frac{1}{2} e^{-\lambda} \}=:p_0  >0
	\end{align}
	and \eqref{eq:mun} follows by \eqref{integraltest}.
	We now turn to the variance bound. By \eqref{eq.SizeBiasAlternative} we see that
	\begin{align*}
	\E[W_n^s]&=\frac{1}{\E[W_n]}\E[W_n^2],	
	\end{align*}
	so that
	\begin{align*}
	\Var[W_n]&=\E[W_n^2]-\E[W_n]^2=\E[W_n]\,\E[W_n^s-W_n]\\
	&=\E[W_n]\left(\E[\1\{\D_n(I)>0\}+D_{n,I}+R_{n,I}]\right)\geq c_{v}\, n,
	\end{align*}
	since 
	$$
	\E[\1\{\textnormal{\texttt{D}}_n(I)>0\}]\geq \frac{1}{\mu_n} \sum_{i=1}^n \vartheta_{i,n} \P(\out(i)>0)  \geq \frac{1}{\mu_n}\sum_{i=2}^n  \vartheta_{i,n} (1-p_0)> (1-p_0)\Big(1-\frac{1}{\mu_n}\Big).
	$$
	
\end{proof}
\subsection{Proof of Lemma~\ref{le:Dni}}
For the proof of Lemma~\ref{le:Dni} we need the following Proposition, which gives an upper bound on the impact of an isolated vertex on the outdegrees of vertices in the network.
\begin{prop}\label{Prop:ajl}
	For any $ j,i\in [n]$, $ J \subset \{1, \dots ,\min\{i,j\} \}$ and any attachment rule $ f  $ with $f(k)\leq \gamma k +1$ for some $ \gamma \in (0,1)$ and $ f(0)<1 $ we have
	\begin{align*}
	\prod_{k\in J} \P((j,k)\notin \cG\vert (i,k)\notin \cG)- \prod_{k\in J} \P((j,k)\notin \cG)\lesssim (ij)^{\gamma-1}\sum_{k=1}^\ell k^{-2\gamma}=: \xi^\ell_{j,i}, 
	\end{align*}
	where $ \ell $ denotes the largest integer in $ J$.
\end{prop}
\begin{proof}
	For any $ k\in J $ we have 
	\begin{align*}
	&\P((j,k)\notin \cG\vert (i,k)\notin \cG)\\
	&=\P((j,k)\notin \cG)+(\P((j,k)\notin \cG\vert (i,k)\notin \cG)-\P((j,k)\notin \cG\vert (i,k)\in \cG))\P((i,k)\in \cG)\\
	&=\P((j,k)\notin \cG)+(\P((j,k)\in \cG\vert (i,k)\in \cG)-\P((j,k)\in \cG\vert (i,k)\notin \cG))\P((i,k)\in \cG),
	\end{align*}
	so that for any $ \ell \in J $
	\begin{align*}
	&\prod_{k\in J}\P((j,k)\notin \cG\vert (i,k)\notin \cG)\leq \P((j,\ell)\notin \cG)\cdot \prod_{k\in J \backslash \{\ell\}}\P((j,k)\notin \cG\vert (i,k)\notin \cG) \\
	&\qquad \qquad \qquad \qquad \qquad \qquad + (\P((j,\ell)\in \cG\vert (i,\ell)\in \cG)-\P((j,\ell)\in \cG\vert (i,\ell)\notin \cG))\P((i,\ell)\in \cG).
	\end{align*}
	By iteration we thus 
	\begin{align*}
	&\prod_{k\in J}\P((j,k)\notin \cG\vert (i,k)\notin \cG)\\
	&\quad \leq\prod_{k\in J \backslash \{\ell\}}\P((j,k)\notin \cG)+ \sum_{k\in J}(\P((j,k)\in \cG\vert (i,k)\in \cG)-\P((j,k)\in \cG\vert (i,k)\notin \cG))\P((i,k)\in \cG).
	\end{align*}
 Lemma~\ref{le:DM 2.8}  in combination with Lemma~\ref{le:StochasticDomination} shows that
	\begin{align*}
	(\P((j,k)\in \cG\vert (i,k)\in \cG)-\P((j,k)\in &\cG\vert (i,k)\notin \cG))\P((i,k)\in \cG)\leq \frac{f(1)}{f(0)}\P((j,k)\in \cG)\,\P((i,k)\in \cG)
	\end{align*}
	so that Lemma~\ref{le:moment} yields the result.
\end{proof}
\begin{proof}[Proof of Lemma~\ref{le:Dni}]We start by introducing the family of random variables
	\begin{align}\label{def:Y}
	Y_{n,j}^{(i)}&:=\1\{\text{vertex $ i $ is the only neighbour of vertex  }j\text{ in } \cG_n\},
	\end{align}
	so that $ D_{n,i}=\sum_{j=1}^n Y_{n,j}^{(i)} $. Note that for every $ j \in [n] $ the random variable $ Y_{n,j}^{(i)} $ can be one for at most one $ i \in [n] $. Hence,  $ \sum_{i=1}^n D_{n,i}\leq n $ and thus
	\begin{align*}
	\E[D_{n,I}] =\frac{1}{\mu_n} \E[\sum_{i=1}^n \vartheta_{i,n}D_{n,i}]\lesssim 1
	\end{align*}
by Lemma~\ref{le:orderEV}.	We now turn to the second-order properties of $ D_{n,i} $ for $ i \in [n] $. %With the bound on $ \E[D_{n,I}] $ we just deduced it is straightforward to see that $ \E[D_{n,I}^2]\leq n $, since $ D_{n,I} \leq n $ a.s. For $ \gamma\leq \frac{1}{2} $, we can further improve this bound.
We have
	\begin{align}\label{Eq:2ndMomentDni}
\E[D_{n,i}^2]=\E[D_{n,i}]+ 2 \sum_{\ell=1}^n \sum_{m=1}^{\ell-1}\P(Y_{n,\ell}^{(i)}=1)\P(Y_{n,m}^{(i)}=1\vert Y_{n,\ell}^{(j)}=1).
\end{align}
and for the  the covariances
	\begin{align} \label{CovDniDnj}
	\Cov(D_{n,i},D_{n,j})&=\sum_{\ell=1}^n \sum_{m=1}^n\Big( \P(Y_{n,\ell}^{(i)}Y_{n,m}^{(j)}=1)-\P(Y_{n,\ell}^{(i)}=1)\P(Y_{n,m}^{(j)}=1)\Big)\notag\\
	&=\sum_{\ell=1}^n \sum_{m=1}^n \P(Y_{n,\ell}^{(i)}=1)\Big(\P(Y_{n,m}^{(j)}=1\vert Y_{n,\ell}^{(i)}=1)-\P(Y_{n,m}^{(j)}=1)\Big).
	\end{align}
To deal with these expressions we have to consider the conditional probabilities $ \P(Y_{n,m}^{(j)}=1\vert Y_{n,\ell}^{(i)}=1) $. 
First note that 
\[
\P((2,1)\notin \cG)= 1- f(0)=1-\eta >0
\]
and
\begin{align}\label{eq:ProbEdgeNotInG}
\P((m,\ell)\notin \cG)&= 1- \frac{\E[f(\deg_{m-1}(\ell))]}{m-1} \geq 1-\frac{\gamma(m-1-\ell)+1}{m-1}\notag\\&=1-\gamma+ \frac{\gamma \ell-1}{m-1}\geq \frac{1}{2}(1-\gamma)
\end{align}
for $ m\geq 3 $ and all $ \ell \in [m-1]. $ Denote by $ c_{\eta, \gamma}=\min\{1-\eta, \frac{1}{2}(1-\gamma)\}^{-1} $. Then,
	\begin{align}\label{ineq:inverseredge}
	\P((m,\ell)\notin \cG)^{-1}=1+ \frac{\P((m,\ell)\in \cG)}{\P((m,\ell) \notin \cG)} \leq 1+c_{\eta, \gamma}\,  \P((m,\ell)\in \cG). 
	\end{align}
	
 We now distinguish the possible cases of constellations of $ \ell,m, i $ and $ j $, with $ i>j $, to deal with the conditional probabilities $ \P(Y_{n,m}^{(j)}=1\vert Y_{n,\ell}^{(i)}=1)  $.
	For $ \ell<m<j $ Proposition~\ref{Prop:ajl}, independence structures \texttt{\textbf{Ia}},\texttt{\textbf{ Ib}} and the inequality in \eqref{ineq:inverseredge} yield 
	\begin{align}\label{DniCov1}
	&\P(Y_{n,m}^{(j)}=1\vert Y_{n,\ell}^{(i)}=1)\notag\\
	&\qquad =\P(\o(m)=0, (j,m)\in \cG,\d_{n}(m)=1\vert \o(\ell)=0, (i,\ell)\in \cG,\d_{n}(\ell)=1))\notag\\
	&\qquad = \ \prod_{r=1}^{\ell-1}\P((m,r)\notin \cG\vert (\ell,r) \notin \cG) \cdot \prod_{r=\ell+1}^{m-1}\P((m,r)\notin \cG) \cdot \P(\d_n(m)=1, (j,m) \in \cG)\notag\\
		&\qquad\leq  \Big( \prod_{r=1}^{\ell-1}\P((m,r)\notin \cG)+\xi_{m,\ell}^{\ell-1}\Big) \cdot \prod_{r=\ell+1}^{m-1}\P((m,r)\notin \cG)  \P(\d_n(m)=1, (j,m) \in \cG)\notag\\
	&\qquad\leq\P(Y_{n,m}^{(j)}=1)\big(1+c_{\eta, \gamma}\, \P((m,\ell)\in \cG)\big)\notag\\
	&\qquad \qquad+ \xi_{m,\ell}^{\ell-1} \prod_{r=\ell+1}^{m-1}\P((m,r)\notin \cG) \cdot \P(\d_n(m)=1, (j,m) \in \cG)\notag\\
	&\qquad \leq \P(Y_{n,m}^{(j)}=1)\Big(1+c_{\eta, \gamma}\, \P((m,\ell)\in \cG)+p_0^{-1}\xi_{m,\ell}^{\ell-1}\Big), 
	\end{align}	
	since
	\[
	\prod_{r=1}^{m-1}\P((m,r)\notin \cG)=\P(\out(m)=0)\geq p_0.	\] 
	Analogously, we obtain for $ m<\ell $ and $ m < j $
	\begin{align}\label{DniCov2}
	&\P(Y_{n,m}^{(j)}=1\vert Y_{n,\ell}^{(i)}=1)\notag\\
	&\qquad = \ \prod_{r=1}^{m-1}\P((m,r)\notin \cG\vert (\ell,r) \notin \cG)  \P(\d_n(m)=1, (j,m) \in \cG\vert (\ell,m) \notin \cG)\notag\\
	&\qquad\leq\P(Y_{n,m}^{(j)}=1)\Big(1+c_{\eta, \gamma}\, \P((\ell,m)\in \cG)+c^\ast \xi_{m,\ell}^{m-1}\Big),
	\end{align}
	where $ c^\ast= c_{\eta,\gamma}\,  p_0^{-1} $.
	Note that due to independence structure \texttt{\textbf{Ia}} all calculations up to this point  hold irrespective of whether $ i=j $ or $ i\neq j $. However, this is no longer true for $ m>  j$ since in this case the events $ \{Y_{n,m}^{(j)}=1\} $ and $ \{Y_{n,\ell}^{(j)}=1\} $ both depend on the indegree of vertex $ j$. For $ i \neq j $ and $ j<m<\ell $ we get
	\begin{align}\label{DniCov3}
	&\P(Y_{n,m}^{(j)}=1\vert Y_{n,\ell}^{(i)}=1)\notag\\
	&\quad = \ \prod_{r=1, r\neq j}^{m-1}\P((m,r)\notin \cG\vert (\ell,r) \notin \cG) \ \P((m,j) \in \cG\vert (\ell,j) \notin \cG) \P(\d_n(m)=0\vert (\ell,m) \notin \cG)\notag\\
	&\quad\leq\P(Y_{n,m}^{(j)}=1)\Big(1+c_{\eta,\gamma}\,\P((\ell,m)\in \cG)\big)+ c^\ast\, \xi_{m,\ell}^{m-1}\Big),
	\end{align}
	since $ \P((m,j) \in \cG\vert (\ell,j) \notin \cG)\leq \P((m,j) \in \cG) $. \\
	For $ i \neq j $ the last case to consider is the case $ m> j, \ell.  $ We have
	\begin{align}\label{DniCov4}
	&\P(Y_{n,m}^{(j)}=1\vert Y_{n,\ell}^{(i)}=1)\notag\\
	&\quad \leq\ \prod_{r=1, r \neq j}^{\ell-1}\P((m,r)\notin \cG\vert (\ell,r) \notin \cG) \prod_{r=\ell+1, r\neq j}^{m-1}\P((m,r)\notin \cG)\ \P((m,j) \in \cG) \P(\d_n(m)=0)\notag\\
	&\quad\leq\P(Y_{n,m}^{(j)}=1)\Big(1+c_{\eta,\gamma}\,\P((m,\ell)\in \cG)+p_0^{-1}\,\xi_{m,\ell}^{\ell-1}\Big).
	\end{align}
	For $ i=j $, we need to replace  $ \P((m,j) \in \cG) $ with $ \P((m,j) \in \cG\vert (\ell,j) \in \cG) $ for $ m,\ell >j $, i.e. in the last two cases. Since 
	$$ \P((m,j) \in \cG\vert (\ell,j) \in \cG) \leq\frac{f(1)}{f(0)}\P((m,j) \in \cG) 
	$$  according to Lemma~\ref{le:DM 2.8},
	we obtain
	\begin{align*}
	&\P(Y_{n,m}^{(j)}=1\vert Y_{n,\ell}^{(j)}=1)
	\ \lesssim \ \P(Y_{n,m}^{(j)}=1)
	\end{align*}
	for any $ m,\ell, j $. Note that for $ m> j $ we have
	\begin{align*}
	\P(Y_{n,m}^{(j)}=1)\,\leq \, 	\P(\d_n(m)=0) \P((m,j)\in \cG)\, \lesssim \, \Big(\frac{m}{n}\Big)^\eta m^{\gamma-1}	j^{-\gamma} 
	\end{align*}
	and 
		\begin{align*}
	\P(Y_{n,m}^{(j)}=1)\,&\leq \,	\prod_{r=m+1}^{j-1}\P((r,m)\notin \cG\vert \d_{r-1}(m)=0) \P((j,m)\in \cG\vert \d_{j-1}(m)=0)\\
	&\qquad \qquad \times 	\prod_{r=j+1}^{n}\P((r,m)\notin \cG\vert \d_{r-1}(m)=1) \\
	&\lesssim\,  \Big(\frac{m}{n}\Big)^\eta j^{-1} 
	\end{align*}
	for $ m<j $. Combing Equation~\eqref{Eq:2ndMomentDni}  with the considerations above and  finally using \eqref{integraltest} we thus obtain
	\begin{align}\label{DniVar}
\E[D_{n,I}^2]&=	\frac{1}{\mu_n}\sum_{j=1}^n \vartheta_{j,n}\E[D_{n,j}^2]\lesssim \  \frac{1}{\mu_n}\sum_{j=1}^n \vartheta_{j,n}\Big(\E[D_{n,j}]+2\sum_{\ell=1}^n \sum_{m=1}^{\ell-1}\P(Y_{n,\ell}^{(j)}=1)\P(Y_{n,m}^{(j)}=1)	\Big)\notag\\
&\lesssim    \frac{1}{\mu_n}\sum_{j=1}^n \vartheta_{j,n}\Big(\E[D_{n,j}]+2\sum_{\ell=1}^{j-1}\sum_{m=1}^{\ell-1}\P(Y_{n,\ell}^{(j)}=1)\P(Y_{n,m}^{(j)}=1)\notag\\
&\qquad  +2	\sum_{\ell=j+1}^{n}\Big(\sum_{m=1}^{j-1}\P(Y_{n,\ell}^{(j)}=1)\P(Y_{n,m}^{(j)}=1) + \sum_{m=j+1}^{\ell-1}\P(Y_{n,\ell}^{(j)}=1)\P(Y_{n,m}^{(j)}=1)\Big)\Big)\notag\\
&\lesssim 
\begin{cases}
1 &\text{  for } \gamma \leq \frac{1}{2},\\
%\log(n)&\text{  for } \gamma = \frac{1}{2},\\
n^{2\gamma-1}&\text{  for } \gamma > \frac{1}{2}.
\end{cases}
	\end{align}
	For the covariance we first remark that
	\[
	\P((m,\ell)\in \cG)\lesssim \xi_{m,\ell}^{\ell} \qquad \text{ and } \qquad \P((\ell,m)\in \cG)\lesssim \xi_{m,\ell}^{m}
	\] so that by plugging
	\eqref{DniCov1}, \eqref{DniCov2}, \eqref{DniCov3} and \eqref{DniCov4} into \eqref{CovDniDnj} we  obtain
	\begin{align}\label{DniCov}
	&\Cov(Y_{n,\ell}^{(i)},Y_{n,m}^{(j)})\, \lesssim \, \P(Y_{n,\ell}^{(i)}=1)\,\P(Y_{n,m}^{(j)}=1)\, \xi_{m,\ell}^{m\wedge \ell},
	\end{align}
	where $ m\wedge \ell=\min\{m,\ell\} $.
    Keeping in mind that $ Y_{n,m}^{(i)}Y_{n,m}^{(j)}=0 $ for $ i \neq j $, we finally obtain
	\begin{align*}
	&\sum_{i=1}^n \sum_{j=1}^{i-1}\Cov(D_{n,i},D_{n,j}) =\sum_{i=1}^n \sum_{j=1}^{i-1} \sum_{m=1}^n\sum_{\ell=1}^{n}   \Cov(Y_{n,\ell}^{(i)},Y_{n,m}^{(j)})\\
	&\leq \sum_{i=1}^n \sum_{j=1}^{i-1} \Bigg[\sum_{m=1}^{j-1}j^{-1}\Bigg(\sum_{\ell=1}^{m-1}i^{-1} m^{\gamma-1} \ell^{-\gamma} +\sum_{\ell=m-1}^{i-1}i^{-1}m^{-\gamma} \ell^{\gamma-1} +\sum_{\ell=i+1}^n(mi)^{-\gamma}\ell^{2\gamma-2}\Big(\frac{\ell}{n}\Big)^\eta\Bigg)\\ 
	&\qquad \qquad +\sum_{m=j+1}^{i-1}m^{\gamma-1}j^{-\gamma}\Big(\frac{m}{n}\Big)^\eta\Bigg(\sum_{\ell=1}^{m-1} i^{-1}
	 m^{\gamma-1}\ell^{-\gamma}+\sum_{\ell=m+1}^{i-1}i^{-1}m^{\gamma-1}\ell^{-\gamma}  \\ 
	&\qquad  \qquad+\sum_{\ell=i+1}^{n}(im)^{-\gamma}\ell^{2\gamma-2} \Big(\frac{\ell}{n}\Big)^\eta\Bigg)+\sum_{m=i+1}^{n}m^{\gamma-1}j^{-\gamma}\Big(\frac{m}{n}\Big)^\eta \Bigg(\sum_{\ell=1}^{i-1}i^{-1}m^{\gamma-1}\ell^{-\gamma}\\
	&\qquad  \qquad +\sum_{\ell=i+1}^{m-1}\ell^{-1} i^{-\gamma}m^{\gamma-1}\Big(\frac{\ell}{n}\Big)^\eta+\sum_{\ell=m+1}^{n}(im)^{-\gamma}\ell^{2\gamma-2} \Big(\frac{\ell}{n}\Big)^\eta \Bigg)\Bigg]
	\lesssim n,	
	\end{align*}
	where we made repeated use of \eqref{integraltest}.
\end{proof}
\subsection{Proof of Lemma~\ref{le:Rni}}
	\begin{proof}
		First of all note that edges with both endpoints younger than vertex $ i $ are not affected by the isolation of vertex $ i $, i.e they are present in $\bar{\cG}^{(i)}$ if and only if they are present in $ \cG $. Furthermore, every edge not present in $ \bar{\cG}^{(i)} $ that is part of $ \cG $ can produce at most two additional isolated vertices. Thus, using Corollary~\ref{le:indegreediff}, we see that
		\begin{align*}
		 \E[R_{n,I}]= \frac{1}{\mu_n}\sum_{i=1}^n \vartheta_{i,n} \E[R_{n,i}] &\leq \frac{2}{\mu_n}\sum_{i=1}^n \sum_{k=1}^{i-1}\E[\Delta_n^{(i)}\texttt{d}(k)]  \leq \frac{2}{\mu_n} \sum_{i=1}^n \sum_{k=1}^{i-1} n^\gamma i^{\gamma-1}k^{-2\gamma} \\
		 &\asymp
		\begin{cases}
		1 &\text{ for } \gamma < \frac{1}{2},\\
		\log(n) &\text{ for } \gamma = \frac{1}{2},\\
		n^{2\gamma-1} &\text{ for } \gamma> \frac{1}{2}.
		\end{cases}
		\end{align*}
		Turning now to the second moment of $ R_{n,I} $, we have
		\begin{align}\label{eq:Rni}
		\E[R_{n,I}^2]&= \frac{1}{\mu_n}\sum_{i=1}^n \vartheta_{i,n}\E[R_{n,i}^2]\notag\\
		&\leq \frac{4}{\mu_n}\E\Big[\sum_{i=1}^n \Big(\sum_{k=1}^{i-1} \Delta^{(i)}_{n} \texttt{d}(k)^2+  \sum_{k=1}^{i-1}\sum_{\ell=1, \ell \neq k}^{i-1} \Delta^{(i)}_{n} \texttt{d}(k) \Delta^{(i)}_{n} \texttt{d}(\ell)\Big)\Big]. 
		\end{align}
		By the construction of the random graphs $ \cG $ and $ \bar{\cG}^{(i)} $ the random variables $ \Delta^{(i)}_{n} \texttt{d}(k) $ and $ \Delta^{(i)}_{n} \texttt{d}(\ell) $ are independent for $ k \neq \ell $ (cf. independence structure \texttt{\textbf{Ia}}), so that
		\begin{align}\label{Rnimixed}
		\E\Big[\sum_{i=1}^n\sum_{k=1}^{i-1}\sum_{\ell=1, \ell \neq k}^{i-1} \Delta^{(i)}_{n} \texttt{d}(k) \Delta^{(i)}_{n} \texttt{d}(\ell)\Big]&= \sum_{i=1}^n\sum_{k=1}^{i-1}\sum_{\ell=1, \ell \neq k}^{i-1} \E\big[\Delta^{(i)}_{n} \texttt{d}(k)\big] \E\big[\Delta^{(i)}_{n} \texttt{d}(\ell)\big]\notag\\
		&\lesssim \begin{cases}
		n&\text{ for } \gamma < \frac{1}{2},\\
		n\log(n)^3 &\text{ for } \gamma = \frac{1}{2},\\
		n^{4\gamma-1} &\text{ for } \gamma> \frac{1}{2},
		\end{cases}
		\end{align}
		according to Corollary~\ref{le:indegreediff}. 
		To deal with the first term in \eqref{eq:Rni} note that
		\begin{align*}
		&\E\big[\Delta^{(i)}_{n} \texttt{d}(k)^2\big]= \E\Big[\sum_{r=k+1}^n \1\{(r,k)\in \cG, (r,k)\notin \cGi\} \\
		&\qquad \qquad \qquad \qquad \qquad + \sum_{r=k+1}^n\sum_{s=k+1\atop s \neq r}^n \1\{(r,k)\in \cG, (r,k)\notin \cGi\}\1\{(s,k)\in \cG, (s,k)\notin \cGi\}\Big]\\
		&\qquad \leq \E\big[\Delta^{(i)}_{n} \texttt{d}(k)\big]+ \sum_{r=k+1}^n\sum_{s=k+1\atop s \neq r}^n \P((r,k)\in \cG,(s,k)\in \cG, (i,k)\in \cG)\\
		&\qquad \lesssim n^\gamma i^{\gamma-1}k^{-2\gamma}+n^{2\gamma}i^{\gamma-1} k^{-3\gamma}\, \asymp \, n^{2\gamma}i^{\gamma-1} k^{-3\gamma}
		\end{align*}
		and thus
		\begin{align}\label{SecondmomentDelta}
		\sum_{i=1}^n\sum_{k=1}^{i-1}\E\big[\Delta^{(i)}_{n} \texttt{d}(k)^2\big]&\lesssim   \begin{cases}
		n &\text{ for }\gamma <\frac{1}{3},\\
		n\log(n) &\text{ for }\gamma =\frac{1}{3},\\
		n^{3\gamma}&\text{ for }\gamma >\frac{1}{3}.\\
		\end{cases}
		\end{align}
				
		Plugging \eqref{Rnimixed} and \eqref{SecondmomentDelta} into \eqref{eq:Rni} yields \eqref{2ndmomentRni}.\\
		We now turn to the covariances. For $ j <i  $ we have
			\begin{align*}
			\Cov(R_{n,i},R_{n,j})&=\E\big[\sum_{\ell=1}^n\sum_{m=1}^n \1\{\ell \in \mathsf{R}_{n,i}\} \1\{m \in \mathsf{R}_{n,j}\}\big]\\
			& \qquad \qquad -\E\big[\sum_{\ell=1}^n\1\{\ell \in \mathsf{R}_{n,i}\}\big]\ \E\big[\sum_{m=1}^n  \1\{m \in \mathsf{R}_{n,j}\}\big].
			\end{align*}
				To deal with this differences, we will condition on the event that vertices $ \ell $, $ m $, $  j  $ and $ i $  have a common older neighbour in $ \cG $. 
				To do so, we denote by $ \textbf{d}_\mathcal{G} $ the geodesic graph distance of vertices in $ \cG $, i.e. for vertices $ m $ and $ \ell $ in $ \cG $ $ \textbf{d}_\mathcal{G}(m, \ell) $ denotes the minimal number of edges in a path connecting vertices $ m $ and $ \ell $. If there is no path connecting the two vertices we put  $ \textbf{d}_\mathcal{G}(m , \ell)=\infty. $
				We then define 
				\begin{align*}
				&\mathsf{N}^{(i,j)}(\ell,m)=\{ k \in [\min\{j,m,\ell\}]: \textbf{d}_\mathcal{G}(m, k)\leq 1,\, \textbf{d}_\mathcal{G}( \ell,k)\leq 1,\, \textbf{d}_\mathcal{G}(j,k) \leq 1,\, \textbf{d}_\mathcal{G}(i,k)\leq 1\}.
				\end{align*}
				Due to the construction of the coupled graph $\bar{\cG}^{(i)}$  				
				the event $ \{\ell \in \mathsf{R}_{n,i}\} $ depends on the existence or non-existence of edges in the following two sets:
				\begin{align*}
				\mathsf{E}^{(i)}_1(\ell)&= \{(r,k)\in \cG: k<\min\{\ell,i\}, r\geq k+1 \text{ with }  (\ell,k) \in \cG \text{ and }  (i,k) \in \cG \}. 
				\end{align*}
				and for $ i > \ell $ with  ($ i,\ell) \in \cG  $
				\begin{align*}
				\mathsf{E}^{(i)}_2(\ell)&= \{(r,\ell)  \in \cG: r \geq \ell+1\}.
				\end{align*}
				Note that $ \mathsf{E}^{(i)}_2(\ell)\cap \mathsf{E}^{(j)}_2(m)=\emptyset $ for pairwise distinct $  i,j,\ell, m \in \N $ and the existence of edges in $ \mathsf{E}^{(i)}_2(\ell) $ is independent of the existence of edges in $  \mathsf{E}^{(j)}_2(m) $ for $ m\neq \ell $ (see independence structure \texttt{Ia}). Moreover, 
				\[
				\mathsf{E}_1^{(i)} (\ell) \,\cap \,\mathsf{E}_1^{(i)} (m)= \{(r,k) \in \cG: k< \min\{i,j,\ell,m \},\{ (i,k), (j,k)\, (\ell,k), (m,k)\}\in \cG\},
				\]
				so that 			
				on the event $ \{\mathsf{N}^{(i,j)}(\ell,m)= \emptyset  \}$ we also have $ \mathsf{E}_1^{(i)} (\ell) \cap \mathsf{E}_1^{(i)} (m)=\emptyset$, which means that the events $ \{\ell \in \mathsf{R}_{n,i}\} $  and $ \{m \in \mathsf{R}_{n,j}\} $ depend on disjoint and independent sets of edges. 
			   Hence,
			   \begin{align*}
			   &\E\big[ \1\{\ell \in \mathsf{R}_{n,i}\} \1\{m \in \mathsf{R}_{n,j}\}\1\{\mathsf{N}^{(i,j)}(\ell,m)= \emptyset \}\big] -\E\big[\1\{\ell \in \mathsf{R}_{n,i}\}\big]\ \E\big[\  \1\{m \in \mathsf{R}_{n,j}\}\big]\\
			   &\leq \P(\ell \in \mathsf{R}_{n,i})\Big(\P(m \in \mathsf{R}_{n,j}\vert \mathsf{N}^{(i,j)}(\ell,m)= \emptyset )- \P(m \in \mathsf{R}_{n,j})\Big)\\
			   &\leq   \P(\ell \in \mathsf{R}_{n,i})\P(m \in \mathsf{R}_{n,j})\ \frac{\P(\mathsf{N}^{(i,j)}(\ell,m)\neq \emptyset)}{\P(\mathsf{N}^{(i,j)}(\ell,m)= \emptyset)}\lesssim \P(\mathsf{N}^{(i,j)}(\ell,m)\neq \emptyset),
			   \end{align*}
			   as 
			   \[
			   \P(\mathsf{N}^{(i,j)}(\ell,m)= \emptyset)\geq p_0.
			   \]
			  			   To deal with   $ \P(\mathsf{N}^{(i,j)}(\ell,m)\neq \emptyset) $, let $ \mathsf{k}_{j,m,\ell}^\ast$ denote the oldest vertex (i.e. the vertex with the smallest label) in $ \mathsf{N}^{(i,j)}(\ell,m) $.  We then have
			  			   \begin{align*}
			  			   \P(\mathsf{N}^{(i,j)}(\ell,m)\neq \emptyset)= \sum_{k=1}^{\min \{j,m,\ell\}-1} \P(\mathsf{k}_{j,m,\ell}^\ast=k)+ \P(\mathsf{k}_{j,m,\ell}^\ast=\min \{j,m,\ell\} ).
			  			   \end{align*}
			  			For $ k \in \{1,\ldots,\min \{j,m,\ell\}-1\} $ we have
			  			   \begin{align*}
			  			 \P(\mathsf{k}_{j,m,\ell}^\ast=k) &\leq  \P((\ell,k)\in \cG, (m,k)\in \cG, (i,k) \in \cG, (j,k) \in \cG)\lesssim (ij\ell m)^{\gamma-1} k^{-4\gamma}.
			  			   \end{align*}
			  			   Furthermore, for an ordering $ \{r_1,r_2, r_3 \}$ of $ \{j,m,\ell\} $ with $  r_1<r_2<r_3$   we have
			  			   \[
			  			   \P(\mathsf{k}_{j,m,\ell}^\ast=\min \{j,m,\ell\} )\lesssim r_1 ^{-3\gamma } (r_2 r_3i)^{\gamma-1}.
			  			   \]
			  			Straightforward case distinctions in combination with repeated use of  \eqref{integraltest} lead to
			   \begin{align*}
			  \sum_{i=1}^n\sum_{j=1}^{i-1} \sum_{m=1}^n \sum_{\ell=1}^n \P(\mathsf{k}_{j,m,\ell}^\ast=\min \{j,m,\ell\} )\lesssim
			  &\begin{cases}
			  n&\text{ for } \gamma < \frac{1}{3},\\
			 n\log(n)   &\text{ for } \gamma = \frac{1}{3},\\
			     n^{3\gamma} &\text{ for } \gamma > \frac{1}{3}.\\
			  \end{cases}
			   \end{align*}
				Consequently,
			\begin{align*}
			&	\sum_{i=1}^n \sum_{j=1}^{i-1}	\Cov(R_{n,i},R_{n,j})\\
		&=\sum_{i=1}^n\sum_{j=1}^{i-1}\sum_{\ell=1}^n\sum_{m=1}^n\Big(\E[\1\{\ell \in \mathsf{R}_{n,i}\} \1\{m \in \mathsf{R}_{n,j}\} ]- \E\big[\1\{\ell \in \mathsf{R}_{n,i}\}\big]\ \E\big[\  \1\{m \in \mathsf{R}_{n,j}\}\big] \Big)\\
	& \lesssim\sum_{i=1}^n\sum_{j=1}^{i-1}\sum_{\ell=1}^n\sum_{m=1}^n\P(\mathsf{N}^{(i,j)}(\ell,m)\neq \emptyset)\\
		&\lesssim\sum_{i=1}^n\sum_{j=1}^{i-1} \sum_{\ell=1}^n\sum_{m=1}^n\Big(\sum_{k=1} ^{\min\{j,\ell,m\}-1}(ij\ell m)^{\gamma-1} k^{-4\gamma}+ \P(\mathsf{k}_{j,m,\ell}^\ast=\min \{j,m,\ell\} )\Big)\\
		&\lesssim
 \begin{cases}
	n &\text{ for } \gamma <\frac{1}{4},\\
	n\log(n)&\text{ for } \gamma =\frac{1}{4},\\
	n^{4\gamma} &\text{ for } \gamma >\frac{1}{4}.
	\end{cases}
		\end{align*}
		\end{proof}

\subsection{Proof of Lemma~\ref{le:variancebound}}
\begin{proof}	
	We have
	\begin{align*}
	&\Var[\E\left[W_n^s-W_n\vert \cG_n\right]]=\frac{1}{\mu_n^2}\Var\Big(\sum_{i=1}^n \vartheta_{i,n}(R_{n,i}+D_{n,i}+\1\{\D_n(i)>0\})\Big)\\	
	&\quad \leq \frac{3}{\mu_n^2}\Big(\Var\big(\sum_{i=1}^n \vartheta_{i,n} R_{n,i}\big)+\Var\big(\sum_{i=1}^n \vartheta_{i,n} D_{n,i}\big)+\Var\big(\sum_{i=1}^n \vartheta_{i,n}\1\{\D_n(i)>0\}\big)\Big).
	\end{align*}
	To bound the last of the three terms note that
	\begin{align*}
	\Var\big(\sum_{i=1}^n \vartheta_{i,n}\1\{\texttt{D}_n(i)>0\}\big)\leq \Var\big(\sum_{i=1}^n \1\{\texttt{D}_n(i)>0\}\big),
	\end{align*}
	since $ \Cov(\1\{\texttt{D}_n(i)>0\},\1\{\texttt{D}_n(j)>0\}) >0 $ for all $ i,j \in [n] $ and $\vartheta_{i,n} \in (0,1) \ \forall i \in [n]$. Furthermore, 
	\begin{align}\label{VarInd}
	\Var\big(\sum_{i=1}^n\1\{\texttt{D}_n(i)>0\}\big)= \Var(n-W_n)=\sigma_n^2.
	\end{align}
	To deal with $ \Var(\sum_{i=1}^n D_{n,i}) $ we use Lemma~\ref{le:Dni} to obtain
	\begin{align}\label{VarDni}
	\Var\big(\sum_{i=1}^n D_{n,i}\big)&=\sum_{i=1}^n\Var( D_{n,i})+ 2\sum_{i=1}^n \sum_{j=1}^{i-1} \Cov(D_{n,i},D_{n,j})\notag \\
	&\leq \mu_n \E[D_{n,I}^2]+ n \asymp\, \begin{cases}
	n &\text{  for } \gamma \leq\frac{1}{2},\\
%	n\log(n)&\text{  for } \gamma = \frac{1}{2},\\
	n^{2\gamma}&\text{  for } \gamma > \frac{1}{2}.\\
	\end{cases}
	\end{align}
	It remains to deal with $ \Var\big(\sum_{i=1}^n R_{n,i}\big) $. Using Lemma~\ref{le:Rni} we see that
	\begin{align} \label{VarRni}
	\Var\big(\sum_{i=1}^n R_{n,i}\big) &\leq \sum_{i=1}^n\E[ R_{n,i}^2]+2\sum_{i=1}^n \sum_{j=1}^{i-1} \Cov(R_{n,i}R_{n,j})\notag\\
	&= \mu_n \E[R_{n,I}^2]+ 2\sum_{i=1}^n \sum_{j=1}^{i-1} \Cov(R_{n,i}R_{n,j})\notag\\
	&\lesssim  \begin{cases}
	n &\text{ for } \gamma < \frac{1}{4}\\
	n\log(n) &\text{ for } \gamma = \frac{1}{4}\\
	n^{4\gamma} &\text{ for } \gamma > \frac{1}{4}.
	\end{cases}
	\end{align}
	Combining \eqref{VarInd}, \eqref{VarDni} and \eqref{VarRni} proves the claim.

\end{proof}

\subsection{Proof of Lemma~\ref{le:squareddistance}}

\begin{proof}
	We have
	\begin{align*}
	\E\left[(W_n^s-W_n)^2\right]&=\frac{1}{\mu_n} \sum_{i=1}^n \vartheta_{i,n} \E[(W_n^{s,i}-W_n)^2] \\
	&\leq \frac{3}{\mu_n} \sum_{i=1}^n \vartheta_{i,n}\big( \E[R_{n,i}^2]+ \E[D_{n,i}^2]+ \P(\texttt{d}(i)>0)\big)\\
	&\lesssim  \E[R_{n,I}^2]+ \E[D_{n,I}^2]+ 1
	\end{align*}
	which in combination with Lemmas~\ref{le:Dni} and \ref{le:Rni} directly yields the result.
\end{proof}

\section*{Acknowledgements}
The author was partially supported by the German Academic
Exchange Service (DAAD) via grant 57468851 and by DFG
priority program SPP 2265 \textit{Random Geometric Systems}.
%\textbf{Competing interests:} The author declares none.
\bibliographystyle{alpha} 
\bibliography{IsolatedVertices}

\end{document}